\documentclass{amsart}
\usepackage{amssymb,latexsym}
\usepackage{a4}
\bibliographystyle{plain}
\newtheorem{Thm}{Theorem}[section]
\newtheorem{Lemma}[Thm]{Lemma}
\newtheorem{Prop}[Thm]{Proposition}

\newtheorem{Rem}[Thm]{Remark}


\def\qed{~\hfill$\square$\medbreak}

\def\C{{\mathbb C}}
\def\N{{\mathbb N}}

\def\R{{\mathbb R}}



\def\cP{\mathcal P}

\def\SO{\mathrm{SO}}

\def\SL{\mathrm{SL}}
\def\SU{\mathrm{SU}}
\def\SP{\mathrm{Sp}}

\def\ab{{(\alpha,\beta)}}
\def\jacobi{P_n^\ab}
\def\gab{g_n^\ab}

\usepackage[usenames]{color}

\begin{document}
\title[Jacobi polynomials]
{Inequalities for Jacobi polynomials}
\author[Haagerup
and Schlichtkrull]{Uffe Haagerup
and Henrik Schlichtkrull}
\address{Department of Mathematical Sciences, University of Copenhagen, Denmark}
\email{haagerup@math.ku.dk, schlicht@math.ku.dk}
\date{January 30, 2012}
\subjclass[2000]{22E46, 33C45}
\begin{abstract}
A Bernstein type inequality is obtained for the Jacobi 
polynomials $\jacobi(x)$, which is uniform for  
all degrees $n\ge0$, all real $\alpha,\beta\ge0$,
and all values $x\in [-1,1]$.
It provides uniform bounds on a complete set of 
matrix coefficients for the irreducible representations of 
$\SU(2)$ with a decay of $d^{-1/4}$ in the dimension $d$ 
of the representation. Moreover it complements previous results 
of Krasikov on a conjecture of Erd\'elyi, Magnus and Nevai.
\end{abstract}
\maketitle

\section{Introduction}
For $\alpha,\beta\in\R$, $\alpha,\beta>-1$, 
and $n$ a non-negative integer
we denote by $\jacobi$ the Jacobi polynomial
with the standard normalization.
Recall that in terms of the Gauss
hypergeometric  function, 
$$\jacobi(x)= \frac{\Gamma(n+ \alpha+1)}{\Gamma(\alpha+1)\Gamma(n+1)} \,\,
{}_2F_1(-n,n+\alpha+\beta+1;\alpha+1;\frac{1-z}2).$$
Recall also that for a fixed pair $\ab$ these functions
are orthogonal polynomials on $[-1,1]$
for the weight function 
$$w^\ab(x)=(1-x)^\alpha(1+x)^\beta$$
with the explicit values
$$
\int_{-1}^1 \jacobi(x)^2\,w^\ab(x)\,dx
=\frac{2^{\alpha+\beta+1}}{2n+ \alpha+\beta+1}\,
\frac{\Gamma(n+\alpha+1)\Gamma(n+\beta+1)}
{\Gamma(n+1)\,\Gamma(n+\alpha+\beta+1)}
$$
(see \cite{Sze}, eq.~(4.3.3)).

For $x\in[-1,1]$ and $\alpha,\beta\ge 0$ let
$$\gab(x)=
\left(\frac{\Gamma(n+1)\,\Gamma(n+\alpha+\beta+1)}
{\Gamma(n+\alpha+1)\Gamma(n+\beta+1)}\right)^{1/2}
\left(\frac{1-x}2\right)^{\alpha/2}
\left(\frac{1+x}2\right)^{\beta/2}
\jacobi(x),
$$
then these functions are orthogonal on $[-1,1]$
for the constant weight. Moreover
\begin{equation}\label{Schur}
\frac12 \int_ {-1}^1 \gab(x)^2\,dx=
\frac1{2n+\alpha+\beta+1}.
\end{equation}
In suitable coordinates the functions $\gab$
with arbitrary non-negative integers 
$\alpha$, $\beta$ and $n$
comprise a natural and complete 
set of matrix coefficients for the 
irreducible representations of $\SU(2)$
(see Section \ref{Section repth} below).
The value $2n+\alpha+\beta+1$ in (\ref{Schur})
is exactly the dimension of
the corresponding irreducible representation.

We shall prove the following uniform upper bound

\begin{Thm}\label{thm1} 
There exists a constant $C>0$ such that
$$|(1-x^2)^{\frac14}\,\gab(x)|\le C(2n+\alpha+\beta+1)^{-\frac14}
$$
for all $x\in[-1,1]$, all $\alpha,\beta\ge 0$ 
and all non-negative integers $n$. 
\end{Thm}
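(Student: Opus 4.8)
The plan is to pass to the substitution $x=\cos\theta$ and the associated Liouville normal form. Writing $h(\theta)=(1-x^2)^{1/4}\gab(\cos\theta)=(\sin\theta)^{1/2}\gab(\cos\theta)$, a direct computation from the Jacobi differential equation shows that $h$ solves a Schr\"odinger-type equation
\begin{equation*}
h''(\theta)+Q(\theta)\,h(\theta)=0,\qquad
Q(\theta)=N^2+\frac{\tfrac14-\alpha^2}{4\sin^2(\theta/2)}+\frac{\tfrac14-\beta^2}{4\cos^2(\theta/2)},
\end{equation*}
where $2N=2n+\alpha+\beta+1$, while the orthogonality relation (\ref{Schur}) turns into the clean $L^2$ identity $\int_0^\pi h(\theta)^2\,d\theta=1/N$. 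In these terms the theorem asserts exactly that $\|h\|_\infty\le C(2N)^{-1/4}$; since $\|h\|_2=N^{-1/2}$, this is a bound on how much the oscillating function $h$ may concentrate, and the gain is genuinely a consequence of its oscillation rather than of its mean-square size. The symmetry $\jacobi(-x)=(-1)^n P_n^{(\beta,\alpha)}(x)$ lets me assume $x\ge0$, i.e.\ $\theta\in(0,\pi/2]$, throughout, and the case $n=0$ is an explicit elementary function treated directly.

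The workhorse is the Sonin--P\'olya function $F=h^2+h'^2/Q$, which on the oscillatory region $\{Q>0\}$ satisfies $F'=-h'^2Q'/Q^2$; thus $h^2\le F$ and $F$ increases exactly where $Q$ decreases. A short discussion of the signs of $\tfrac14-\alpha^2$ and $\tfrac14-\beta^2$ fixes the geometry: if $\alpha,\beta<\tfrac12$ then $Q\ge N^2>0$ throughout, there are no turning points, the Liouville--Green approximation holds with standard error control and yields even the stronger exponent $1/2$; in the remaining cases $Q$ has one or two turning points, $F$ is monotone on each oscillatory lobe, and the global maximum of $h^2$ is attained at the extremum $\theta_1$ of $h$ adjacent to the turning point where $Q$ is smallest.

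It then suffices to prove two uniform estimates. First, an \emph{amplitude} bound $F(\theta_1)\,\sqrt{Q(\theta_1)}\le C_1$: writing $h=\sqrt F\cos\psi$ in Pr\"ufer form gives $(\log F)'=-(Q'/Q)\sin^2\psi$, and integrating from the bulk (where $Q\approx N^2$ and the $L^2$ identity forces the Liouville--Green amplitude $F\approx\tfrac{2}{\pi}Q^{-1/2}$) out to $\theta_1$, the point is that $\sin^2\psi$ averages to $\tfrac12$ against $d(\log Q)$, so that $\log F$ changes by $\tfrac12\log\bigl(Q_{\mathrm{bulk}}/Q(\theta_1)\bigr)+O(1)$ and $F(\theta_1)\sqrt{Q(\theta_1)}$ stays bounded. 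Second, a \emph{location} bound $Q(\theta_1)\ge c_2 N$: from the explicit $Q$ one computes the turning point $\theta_*$ and the slope $s=Q'(\theta_*)$, verifies the uniform lower bound $s\ge c\,N^{3/2}$, and concludes $Q(\theta_1)\approx s^{2/3}\gtrsim N$ since the first extremum sits at Airy distance $\sim s^{-1/3}$ from $\theta_*$. Combining, $\|h\|_\infty^2=F(\theta_1)\le C_1/\sqrt{Q(\theta_1)}\le C_1/\sqrt{c_2N}$, which is the asserted bound.

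The main obstacle is the first of these, the uniform amplitude bound: replacing $\sin^2\psi$ by its mean $\tfrac12$ is only heuristic, and making it rigorous requires an honest integration by parts (using $\psi'=\sqrt Q$) to show that the oscillatory error $\int(Q'/Q)\cos 2\psi\,d\theta$ is $O(1)$ uniformly in $n,\alpha,\beta$, i.e.\ a Liouville--Green error estimate that remains valid right up to the turning point. The delicate regime is small degree $n$ with one of $\alpha,\beta$ large, where the bound is actually attained (there $s\sim N^{3/2}$ and $\theta_1$ is pushed toward the endpoint), and the final step is to merge the constants obtained in the different turning-point configurations into a single $C$.
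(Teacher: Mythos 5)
Your route --- Liouville normal form, the Sonin--P\'olya function $F=h^2+h'^2/Q$, Pr\"ufer variables and turning-point analysis --- is genuinely different from the paper's, which instead represents $(1-x)^\alpha(1+x)^\beta\jacobi(x)$ as a Cauchy/Rodrigues contour integral over a circle through the saddle points of the integrand and bounds everything by elementary concavity and Gamma-function inequalities. Your setup is correct as far as it goes: $h$ does satisfy $h''+Qh=0$ with the stated $Q$, the identity $\int_0^\pi h^2\,d\theta=1/N$ holds, $F'=-h'^2Q'/Q^2$, and the scaling of your two target lemmas is consistent with the extremal example $n=0$, $\alpha+\beta\to\infty$. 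But what you have written is a program, not a proof, and the step you yourself flag as ``the main obstacle'' is not a technicality.

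Concretely: (i) the amplitude bound $F(\theta_1)\sqrt{Q(\theta_1)}\le C_1$ rests on replacing $\sin^2\psi$ by its mean $\tfrac12$ in $(\log F)'=-(Q'/Q)\sin^2\psi$ and controlling $\int(Q'/Q)\cos2\psi\,d\theta$ by integrating by parts against $\psi'\approx\sqrt Q$. Near the turning point $\sqrt Q\to0$ while $Q'/Q\to\infty$, so the boundary term $Q'/(2Q^{3/2})$ of that integration by parts blows up; equivalently, the Liouville--Green error-control integral $\int|Q^{-1/4}(Q^{-1/4})''|\,d\theta$ diverges at a turning point (and, logarithmically, at $\theta=0$ whenever $\alpha\ne\tfrac12$). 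An ``LG estimate valid right up to the turning point'' is exactly what standard theory does not supply; you would need Airy-type uniform asymptotics with constants uniform in $n$, $\alpha$, $\beta$ simultaneously, which is the hard core of the Erd\'elyi--Magnus--Nevai/Krasikov program and the reason the result of \cite{Kras} carries the restrictions $\alpha,\beta\ge\frac{1+\sqrt2}{4}$, $n\ge6$. You also calibrate the bulk amplitude $F\approx\frac2\pi Q^{-1/2}$ from the $L^2$ identity, which presupposes equidistribution of the $L^2$ mass --- itself a consequence of the asymptotics you are trying to establish. (ii) The location bound $Q(\theta_1)\ge c_2N$ is asserted via the heuristic ``first extremum at Airy distance $s^{-1/3}$,'' with the key inequality $Q'(\theta_*)\ge cN^{3/2}$ unverified across all turning-point configurations (for small $n$ and large $\beta$ the $\beta$-term creates turning points well inside $(0,\pi/2]$, so the reduction of the global maximum of $h^2$ to a single extremum adjacent to a single turning point also needs the full case analysis). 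Each of (i) and (ii) is a substantial lemma, and until both are proved with uniform constants the theorem is not established. The paper's argument exists precisely to bypass this: the saddle-point radius $r=\sqrt{(1-x^2)/(a+b+1)}$, the concavity estimate of Proposition \ref{lemma bound f}, the elementary integral bound of Lemma \ref{bound on exp integral} and the Stirling-type Lemma \ref{Stirling bound} replace the WKB machinery by inequalities checkable by hand for all $n\ge0$ and all real $\alpha,\beta\ge0$.
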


We have not made a serious effort to find the best value of 
$C$, but at least our proof shows that $C<12$.

With standard normalization the inequality in 
Theorem \ref{thm1} amounts to the
following uniform bound for the Jacobi polynomials
\begin{equation}\label{Jacobi estimate}\begin{aligned}
&(\sin\theta )^{\alpha+\frac12}(\cos\theta)^{\beta+\frac12}
|\jacobi(\cos 2\theta)|\\
&\qquad\qquad \le \frac{C}{\sqrt2} (2n+\alpha+\beta+1)^{-\frac14}
\left(\frac{\Gamma(n+\alpha+1)\Gamma(n+\beta+1)}
{\Gamma(n+1)\Gamma(n+\alpha+\beta+1)}\right)^{1/2}
\end{aligned}\end{equation}
for $0\le\theta\le\pi/2$. 
The decay rate of $1/4$ in Theorem \ref{thm1} is optimal
when $\alpha$ and $\beta$ tend to infinity, see Remark
\ref{remark on Stirling}. However, 
if the pair $\ab$ is fixed, then $\jacobi(x)$ is
$O(n^{-1/2})$ for each $x\neq\pm1$, cf.~\cite{Sze}, Thm.~7.32.2. 
In particular, in 
Legendre's case $\alpha=\beta=0$ where $\jacobi(x)$
specializes to the Legendre polynomial $P_n(x)$,
the Bernstein inequality (refined by
Antonov and Kholshevnikov) 
\begin{equation}\label{Bern ineq}
(1-x^2)^{1/4} |P_n(x)| \le  (4/\pi)^{1/2} (2n+1)^{-1/2}, \qquad x\in[-1,1],
\end{equation}
is known to be sharp, see \cite{Sze}, Thm.~7.3.3,
and \cite{Lorch}.
We refer to \cite{Gautschi} for a further discussion
of the sharpest constant in (\ref{Jacobi estimate}),
with a subset of the current parameter range.

It is of interest also to express our inequality in terms of
the orthonormal polynomials defined by 
$$
\hat P_n^{\ab}(x) = 
\left(\frac{(2n+\alpha+\beta+1)\Gamma(n+1)\,\Gamma(n+\alpha+\beta+1)}
{2^{\alpha+\beta+1}\Gamma(n+\alpha+1)\Gamma(n+\beta+1)}\right)^{1/2}
\jacobi(x)$$
for which
$$
\int_{-1}^1 \hat P_n^{\ab}(x)^2 \,w^\ab(x)\,dx = 1.
$$
Here our estimate reads
$$(1-x^2)^{\frac14}\sqrt{w^{\alpha,\beta}(x)}|\hat P_n^{\ab}(x)|\le 
\frac{C}{\sqrt2}(2n+\alpha+\beta+1)^{\frac14}.
$$
The following generalization of Bernstein's inequality
(\ref{Bern ineq}) was conjectured by Erd\'elyi, Magnus and Nevai, 
\cite{NEM},
\begin{equation}\label{EMN-conjecture}
(1-x^2)^{\frac14}
\sqrt{w^{\alpha,\beta}(x)}|\hat P_n^{\ab}(x)|
\le C' (\alpha+\beta+2)^{1/4}
\end{equation}
for all $\alpha,\beta\ge -\frac12$ and all integers $n\ge 0$,
with a uniform constant $C'>0$. A stronger form of the 
conjecture was recently established by Krasikov,
\cite{Kras}, but only in the parameter range
$\alpha,\beta\ge \frac{1+\sqrt2}4$, $n\ge 6$. Our estimate
is valid for a more general range, but it involves
$2n+\alpha+\beta$ rather than $\alpha+\beta$.
Note however that by combining our results with those of 
\cite{Kras}, one can remove Krasikov's restriction $n\ge 6$ in the 
parameter range for the validity of (\ref{EMN-conjecture}).

The estimate (\ref{Jacobi estimate})
implies a similar estimate for the ultrasperical (Gegenbauer)
polynomials $C^{(\lambda)}_n(x)$, as these are directly
related to the Jacobi polynomials
$P^\ab_n(x)$ with $\alpha=\beta=\lambda-\frac12$.
Previous to \cite{Kras} this case had been considered
in \cite{KrasEMN}, and as above (\ref{Jacobi estimate}) allows the
removal of a restriction on the degree.

The proof of Theorem \ref{thm1} is based on an
expression for $\jacobi(x)$ as a contour integral, for which
we can estimate the integrand by elementary analysis.
The proof is simpler when $\alpha$ and $\beta$ are integers.
In this case, which is treated in Section \ref{Section integral},
the contour is just a circle. The general case is the discussed 
in Section \ref{Section general}.

\section{Motivation from representation theory}
\label{Section repth}

It is well known that the irreducible representations of
$\SU(2)$ can be expressed by Jacobi polynomials. 
In the physics literature it is customary to denote the 
corresponding matrix representations as {\it Wigner's d-matrices}.
We recall a few details (see \cite{Zelobenko}, \S 38,
\cite{Vilenkin}, Ch.~3, or \cite{Koornwindernotat}). 
The irreducible representations $\pi_l$
of $\SU(2)$ are parametrized by the non-negative
integers or half-integers $l=0,\frac12,1,\dots$, where
$2l+1$ is the corresponding dimension. The
standard representation space for $\pi_l$ is 
the space $\cP_l$ of polynomials
in two complex variables $z_1,z_2$, homogeneous of degree
$2l$, on which
the representation is given by
$$[\pi_l\begin{pmatrix} a&b\\c&d\end{pmatrix} f](z_1,z_2)
=f(az_1+cz_2,bz_1+dz_2).$$
Let
$$
k_\phi=\begin{pmatrix} e^{i\phi}&0
\\ 0& e^{-i\phi}
\end{pmatrix}\quad\text{and}\quad
t_\theta=\begin{pmatrix} \cos\theta &-\sin\theta 
\\ \sin\theta & \cos\theta
\end{pmatrix}
$$
for $\phi,\theta\in\R$, then every element $A\in\SU(2)$
allows a decomposition of the form
$A=k_\phi t_\theta k_{-\psi}$. 
The monomials $z_1^jz_2^k$ with $j+k=2l$ form a basis
for $\cP_l$, and it is convenient to use the notation
$$
h^l_p(z_1,z_2)=z_1^{l-p}z_2^{l+p}
$$
where $p=-l,-l+1,\dots,l$. Notice that
these are weight vectors 
$$\pi_l(k_\phi)h^l_p=e^{-i2p\phi} h^l_p,
\quad(p=-1,\dots,l).$$
Choosing the inner product on $\cP_l$ so that
$\pi_l$ is unitary, the functions $h^l_p$
form an orthogonal basis. We denote by
$\hat h^l_p$ the corresponding normalized basis
vectors. For $A\in\SU(2)$ the 
matrix elements $$m^l_{pq}(A)=\langle\pi_l(A)\hat h^l_q,
\hat h^l_p\rangle$$ with $p,q=-l,\dots,l$,
form the so-called Wigner's d-matrix.
Our result for the Jacobi polynomials
implies the following.

\begin{Thm}\label{thm2}
Let $C$ be the constant from Theorem \ref{thm1}. Then
\begin{equation}\label{matrix est}
|\sin 2\theta|^{1/2}\, |m^l_{pq}(k_\phi t_\theta k_{-\psi})|\le C
(2l+1)^{-1/4}
\end{equation}
for all $\phi,\theta,\psi\in\R$,
all $l=0,\frac12,1,\dots$
and all $p,q=-l,\dots,l$. Moreover, the exponent $1/4$ 
on the right hand side is best 
possible.
\end{Thm}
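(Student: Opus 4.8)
The plan is to deduce Theorem~\ref{thm2} from Theorem~\ref{thm1} through the classical identification of Wigner's d-matrix with the Jacobi polynomials. First I would remove the outer factors $k_\phi$ and $k_{-\psi}$. Since the $\hat h^l_p$ are weight vectors, $\pi_l(k_\phi)\hat h^l_p = e^{-i2p\phi}\hat h^l_p$, and using unitarity of $\pi_l$ one finds
$$m^l_{pq}(k_\phi t_\theta k_{-\psi}) = e^{-i2p\phi}\,e^{i2q\psi}\,m^l_{pq}(t_\theta),$$
so that $|m^l_{pq}(k_\phi t_\theta k_{-\psi})| = |m^l_{pq}(t_\theta)|$ is independent of $\phi$ and $\psi$. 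It thus suffices to estimate $|m^l_{pq}(t_\theta)|$.

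The central step is to identify $m^l_{pq}(t_\theta)$ with one of the functions $\gab$. Expanding
$$\pi_l(t_\theta)h^l_q = (\cos\theta\,z_1 + \sin\theta\,z_2)^{l-q}(-\sin\theta\,z_1 + \cos\theta\,z_2)^{l+q}$$
in the monomial basis and using that the $\SU(2)$-invariant inner product makes $\|h^l_p\|^2$ proportional to $(l-p)!\,(l+p)!$, I would write
$$m^l_{pq}(t_\theta) = \left(\frac{(l-p)!\,(l+p)!}{(l-q)!\,(l+q)!}\right)^{1/2} A_{pq}(\theta),$$
where $A_{pq}(\theta)$ is the coefficient of $z_1^{l-p}z_2^{l+p}$ in the product above. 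The classical Jacobi evaluation of this coefficient (reducing to $p\ge|q|$ by the symmetries of the matrix) gives $A_{pq}(\theta) = \pm(\sin\theta)^{p-q}(\cos\theta)^{p+q}P_{l-p}^{(p-q,p+q)}(\cos 2\theta)$. Setting $\alpha = |p-q|$, $\beta = |p+q|$ and $n = l-\max(|p|,|q|)$, the Gamma-factor normalization in the definition of $\gab$ matches the binomial prefactor exactly, whence $|m^l_{pq}(t_\theta)| = |\gab(\cos 2\theta)|$.

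Granting this, the estimate follows at once. Since $\frac{\alpha+\beta}{2} = \max(|p|,|q|)$ we have $2n+\alpha+\beta+1 = 2l+1$, and at $x=\cos 2\theta$ one has $(1-x^2)^{1/4} = |\sin 2\theta|^{1/2}$; hence Theorem~\ref{thm1} yields
$$|\sin 2\theta|^{1/2}\,|m^l_{pq}(t_\theta)| = |(1-x^2)^{1/4}\gab(x)| \le C(2n+\alpha+\beta+1)^{-1/4} = C(2l+1)^{-1/4}.$$
For the sharpness of the exponent I would appeal to Remark~\ref{remark on Stirling}, where the decay $-1/4$ in Theorem~\ref{thm1} is shown to be attained as $\alpha,\beta\to\infty$: choosing integers $p,q$ and dimensions $2l+1$ so that both $\alpha=|p-q|$ and $\beta=|p+q|$ grow, the corresponding coefficients saturate (\ref{matrix est}), ruling out any exponent larger than $1/4$.

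I expect the main obstacle to be the explicit Jacobi evaluation of $A_{pq}(\theta)$ and the accompanying bookkeeping: verifying that the single formula with $\alpha=|p-q|$, $\beta=|p+q|$ covers all sign patterns of $p,q$ via the reflection symmetries of the d-matrix, and that the binomial prefactors collapse precisely onto the Gamma-quotient in $\gab$. The parity constraint $\alpha+\beta = 2\max(|p|,|q|)\in 2\Z$ also means only half the pairs $\ab$ arise, which must be kept in mind when selecting the extremal sequence for the optimality claim.
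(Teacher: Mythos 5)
Your proposal is correct and follows essentially the same route as the paper: both reduce to the classical identification $|m^l_{pq}(k_\phi t_\theta k_{-\psi})| = |g^{\ab}_n(\cos 2\theta)|$ with $\alpha=|p-q|$, $\beta=|p+q|$, $n=l-\max\{|p|,|q|\}$ (the paper simply cites this formula from the literature, whereas you sketch its derivation via the monomial expansion), then use $2n+\alpha+\beta+1=2l+1$ and $(1-\cos^2 2\theta)^{1/4}=|\sin 2\theta|^{1/2}$ to invoke Theorem~\ref{thm1}, and appeal to Remark~\ref{remark on Stirling} for the optimality of the exponent. Your closing remarks on the sign-pattern bookkeeping and the parity constraint $\alpha+\beta\in 2\Z$ are accurate and do not affect the argument.
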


\begin{proof} 
Explicitly the matrix elements are given as follows
(see \cite{Zelobenko}, \cite{Vilenkin}, \cite{Koornwindernotat}). 
For  $p,q=-l,\dots,l$ such that $|q|\le p$,
$$
m^l_{pq}(k_\phi t_\theta k_{-\psi})=
e^{-i2p\phi} e^{i2q\psi}
g^\ab_n(\cos 2\theta),
$$
where
$$\alpha=p-q,\, \beta=p+q,\, n=l-p.$$
For other values of $p$ and $q$ there are similar expressions, 
and in all cases one has 
$$|m^l_{pq}(k_\phi t_\theta k_{-\psi})|=
|g^\ab_n(\cos 2\theta)|$$
where $\alpha=|p-q|$, $\beta=|p+q|$ and 
$n=l-\max\{|p|,|q|\}.$ Moreover
$$\dim\pi_l=2l+1=2n+\alpha+\beta+1.$$
Thus (\ref{matrix est}) follows directly from Theorem \ref{thm1}.
For the last statement of Theorem \ref{thm2},
see Remark \ref{remark on Stirling}.
\qed\end{proof}

\begin{Rem}
For $l$ integral $\pi_l$ descends to a representation of 
$\SO(3)$, and 
the matrix elements $m^l_{p0}$ with $q=0$ descend
to spherical harmonic functions on $S^2\simeq\SO(3)/\SO(2)$. 
With the common normalization from quantum mechanics
the spherical harmonics 
$Y^m_{l}$ with $-l\le m\le l$ 
satisfy
$$Y^m_l(\theta,\phi)=\pm\frac{(2l+1)^{1/2}}{(4\pi)^{1/2}}
\, g^{(\alpha,\alpha)}_{l-\alpha}(\cos\theta)\,e^{im\phi}, 
$$
where $\alpha=|m|$.
From Theorem \ref{thm1} we obtain the
uniform estimate
$$
|\sin\theta|^{1/2}\,|Y_{l}^m(\theta,\phi)|
\le \frac{C}{{(4\pi)}^{1/2}}(2l+1)^{1/4}
$$
for all $\theta,\phi$ and all integers $l,m$ with $|m|\le l$.
\end{Rem}

The Jacobi polynomials are also related to the harmonic 
analysis on the complex spheres with respect to the action
of the unitary group. The spherical functions for the pair 
$(U(q),U(q-1))$ are functions on the unit sphere in $\C^q$,
and in suitable coordinates they can be expressed by means of 
Jacobi functions $P^\ab_n$ with $\alpha=q-2$
(see \cite{Shapiro}, \cite{KII}). 
The direct motivation for the present 
paper was an application of this observation for $q=2$ to 
a study of $\SP(2,\R)$. In \cite{HdL} the first author and 
de Laat apply the uniform estimates of the present paper 
for the case $\alpha=0$, to show that $\SP(2,\R)$ does not 
have the approximation property. Earlier, Bernstein's 
inequality (\ref{Bern ineq}) had been used in \cite{LL} 
with a similar purpose for the group $\SL(3,\R)$.

\section{Integral parameters}\label{Section integral}

The proof is based on the following integral expression,
which is obtained by applying Cauchy's formula to
Rodrigues' formula for $\jacobi(x)$ (see \cite{Sze}, eq.~(4.3.1)),
\begin{equation}\label{integral formula}
(1-x)^\alpha(1+x)^\beta\,\jacobi(x)= (-\tfrac12)^n I^\ab_n(x)
\end{equation}
for $x\in(-1,1)$, where
\begin{equation}\label{In}
I^\ab_n(x)=\frac1{2\pi i} \int_{\gamma(x)} 
\frac{(1-z)^{n+\alpha}(1+z)^{n+\beta}}{(z-x)^n}\,\frac{dz}{z-x}.
\end{equation}
Here $\gamma(x)$ is any closed contour encircling $x$ in 
the positive direction. We assume in this section that
$\alpha$ and $\beta$ are integers $\ge 0$. 
Without this assumption
one would have to request also that $\gamma(x)$ does not
enclose the points $z=\pm 1$. We shall take $\gamma(x)=C(x,r)$,
the circle centered at $x$ and with a radius $r>0$ 
to be specified later. 

The case $n=0$ will be treated separately in Lemma \ref{n=0} below.
Here we assume $n\ge 1$ and let $a=\alpha/n$ and $b=\beta/n$, then
$$
\begin{aligned}
I^\ab_n(x)&=\frac1{2\pi i} \int_{C(x,r)} 
\left(\frac{(1-z)^{a+1}(1+z)^{b+1}}{z-x}\right)^n
\frac{dz}{z-x}\\
&=\frac1{2\pi i} \int_{C(0,r)} 
\left(\frac{(1-x-s)^{a+1}(1+x+s)^{b+1}}{s}\right)^n
\frac{ds}{s}.
\end{aligned}
$$

In order to select a suitable radius $r$
we look for the stationary points of the expression
inside the parentheses, as a function of $s$. We let 
$$\psi(s)=(a+1)\log(1-x-s)+(b+1)\log(1+x+s)-\log s$$
for $s\in\C$, and analyze the derivative
$$\psi'(s)=\frac{a+1}{s+x-1}+\frac {b+1}{s+x+1}-\frac1s,$$ 
which is independent of the branch 
cut used for the complex logarithm. Now
$$\psi'(s)=\frac{As^2+B(x)s+C(x)}{(s+x-1)(x+s+1)s}$$
where
$$A=a+b+1,\quad B(x)=(a+b)x+a-b,\quad C(x)=1-x^2.$$
The numerator is a second order polynomial in $s$ with the 
discriminant
$$\begin{aligned}
\Delta(x)&=B(x)^2-4AC(x)\\&=
(a+b+2)^2x^2+2(a^2-b^2)x+(a-b)^2-4(a+b+1),
\end{aligned}$$
which coincides with the polynomial $\Delta$
defined in \cite{CI}. The polynomial  $\Delta(x)$
has two real roots
$$
\begin{aligned} &x^+\\&x^-\end{aligned}
\biggr \}=
\frac{b^2-a^2\pm4\sqrt{(a+1)(b+1)(a+b+1)}}{(a+b+2)^2}
$$
for which $-1\le x^-< x^+\le 1$.
For $x^-<x<x^+$ we have $\Delta(x)<0$, 
and thus there are two conjugate
solutions $s=s_1,s_2$ to the equation $As^2+B(x)s+C(x)=0$.
They are 
$$s_1,s_2=\frac{-B(x)\pm i\sqrt{-\Delta(x)}}{2A}.$$
Note that
$$|s_1|^2=|s_2|^2=s_1s_2=\frac{C(x)}A=\frac{1-x^2}{a+b+1}.$$
Hence, if we choose the radius
\begin{equation}\label{defi r}
r=\sqrt{\frac{1-x^2}{a+b+1}},
\end{equation}
then our contour $C(0,r)$ will pass through the
stationary points of $\psi$.  
We define $r$ by (\ref{defi r})
for all $x\in(-1,1)$ (also when $\Delta(x)\ge 0$).

We now find
$$
|I^\ab_n(x)|\le \frac1{2\pi } \int_{0}^{2\pi} 
\left|(1-x-re^{i\theta})^{1+a}(1+x+re^{i\theta})^{1+b}
r^{-1}\right|^n
\,d\theta,
$$
and write
$$|(1-x-re^{i\theta})^{1+a}(1+x+re^{i\theta})^{1+b}r^{-1}|=e^{f( \cos\theta)}
$$
where 
\begin{equation}\label{defi f}
\begin{aligned}
f(t)=&\frac{a+1}2\ln\left(r^2+(1-x)^2-2r(1-x)t\right)\\
&+\frac{b+1}2\ln\left(r^2+(1+x)^2+2r(1+x)t\right)-\ln(r)
\end{aligned}
\end{equation}
for $t\in [-1,1]$.
Notice that we allow the possible value $f(t)=-\infty$
in the end points $t=\pm 1$. 
Let
\begin{equation}\label{t1,t2}
t_2=\frac{r^2+(1-x)^2}{2r(1-x)},\quad
t_1=-\frac{r^2+(1+x)^2}{2r(1+x)}
\end{equation}
then $t_1\leq -1$ and $1\leq t_2$. It follows that
\begin{equation}\label{eq f}
f(t)=\frac{a+1}2\ln(t_2-t)+
\frac{b+1}2\ln(t-t_1)+K
\end{equation}
where
\begin{equation}\label{eq K}
K=\frac{a+1}2\ln(1-x)+\frac{b+1}2\ln(1+x)
+\frac{a+b}2\ln r
+\frac{a+b+2}2\ln2
\end{equation}
is independent of $t$. With (\ref{eq f}) we can extend
the domain of definition for $f$ to $[t_1,t_2]\supset[-1,1]$.
For later reference we note that from (\ref{t1,t2}) and
(\ref{defi r}) it follows that
\begin{equation}\label{second t1,t2 expression}
t_1=\frac{-(a+b+2)-(a+b)x}{2\sqrt{a+b+1}\sqrt{1-x^2}},\qquad
t_2=\frac{(a+b+2)-(a+b)x}{2\sqrt{a+b+1}\sqrt{1-x^2}},
\end{equation}
and
\begin{equation}\label{t2-t1}
t_2-t_1=\frac{a+b+2}{\sqrt{a+b+1}\sqrt{1-x^2}}.
\end{equation}

We have
$$
|I^\ab_n(x)|\le \frac1{2\pi } \int_{0}^{2\pi} e^{nf(\cos\theta)}
\,d\theta.
$$
From (\ref{eq f}) we find
\begin{equation}\label{fprime}
f'(t)=-\frac{a+1}{2(t_2-t)}+
\frac{b+1}{2(t-t_1)}=\frac{(a+b+2)(t_0-t)}{2(t_2-t)(t-t_1)},
\end{equation}
where $t_0$ is the convex combination
\begin{equation}\label{defi t0}
t_0=\frac{(a+1)t_1+(b+1)t_2}{a+b+2}=
\frac{-a+b-(a+b)x}{2\sqrt{a+b+1}\sqrt{1-x^2}}\in(t_1,t_2).
\end{equation}
Moreover
$$f''(t)=-\frac{a+1}{2(t_2-t)^2}-\frac{b+1}{2(t-t_1)^2}<0.
$$
Hence the function $f(t)$ is concave and
has a global maximum at $t_0$. 
We thus obtain the initial estimate
\begin{equation}\label{initial}
|I^\ab_n(x)|\le \frac1{\pi } \int_{0}^{\pi} e^{nf(\cos\theta)}
\,d\theta\le e^{nf(t_0)}.
\end{equation}
Since
\begin{equation}\label{t0 differences}
t_2-t_0=\frac{(a+1)(t_2-t_1)}{a+b+2},\quad
t_0-t_1=\frac{(b+1)(t_2-t_1)}{a+b+2}
\end{equation}
we find
$$
f(t_0)=\frac{a+1}2\ln\frac{(a+1)(t_2-t_1)}{a+b+2}
+\frac{b+1}2\ln\frac{(b+1)(t_2-t_1)}{a+b+2}+K,
$$
and from (\ref{eq K}) and (\ref{t2-t1})
it then follows that
$$
f(t_0)=\frac12
\ln\left(
\frac{2^{a+b+2}(a+1)^{a+1}(b+1)^{b+1}}{(a+b+1)^{a+b+1}}
(1-x)^a(1+x)^b\right).
$$
Thus 
$$\begin{aligned}
e^{nf(t_0)}&\le 
\left(\frac{2^{a+b+2}(a+1)^{a+1}(b+1)^{b+1}}{(a+b+1)^{a+b+1}}
(1-x)^a(1+x)^b\right)^{n/2}\\
&=
\left(\frac{2^{a+b+2}(a+1)^{a+1}(b+1)^{b+1}}{(a+b+1)^{a+b+1}}\right)^{n/2}
({1-x})^{\alpha/2}({1+x})^{\beta/2}.
\end{aligned}$$
The inequality
\begin{equation}\label{estimate}
\begin{split}
\frac{\Gamma(n+1)\Gamma(n+\alpha+\beta+1)}
{\Gamma(n+\alpha+1)\Gamma(n+\beta+1)}&\,
\left(\frac{(a+1)^{a+1}(b+1)^{b+1}}{(a+b+1)^{a+b+1}}\right)^{n}\\
&\quad\le 
\left(\frac{(n+1)(n+\alpha+\beta+1)}{(n+\alpha+1)(n+\beta+1)}\right)^{1/2}
\end{split}
\end{equation}
will be shown in Lemma \ref{Stirling bound}. 
Inserting (\ref{initial}) and (\ref{estimate}) into our definition
of $g_n^\ab$ we obtain the initial bound
\begin{equation}\label{initial estimate}
|g_n^\ab(x)|\le 
\left(\frac{(n+1)(n+\alpha+\beta+1)}{(n+\alpha+1)(n+\beta+1)}\right)^{1/4}.
\end{equation}
In particular, since $(n+1)(n+\alpha+\beta+1)\le(n+\alpha+1)(n+\beta+1)$
it follows that $|g_n^\ab(x)|\le 1$
(which could also be seen directly from the fact that
$g_n^\ab$ is a unitary matrix coefficient of orthonormal
vectors).

In order to improve the estimate we need
to replace the inequality $f(t)\le f(t_0)$  
by a stronger inequality. In Proposition
\ref{lemma bound f} below we shall establish
the inequality
\begin{equation}\label{bound f}
f(t)\leq f(t_0)+\frac D{1+t_0^2} f''(t_0)(t-t_0)^2
\end{equation}
for $t\in[-1,1]$, with a suitable constant $D>0$.  
Following the argument from before
and taking into account the second term in (\ref{bound f})
we can then improve (\ref{initial}) with the extra factor
$$\frac1{\pi}\int_0^{\pi} \exp\left(\frac{nD}{1+t_0^2} 
f''(t_0)(\cos\theta-t_0)^2
\right)\,d\theta
$$
on the right hand side. 

For the estimation of the exponential 
integral we use Lemma \ref{bound on exp integral}
below, which is applicable 
since $f''(t_0)<0$. We let 
$$
u=t_0\sqrt{\frac{nD}{1+t_0^2}|f''(t_0)|},
\quad
v=\sqrt{\frac{nD}{1+t_0^2}|f''(t_0)|},
$$
and observe that $u^2+v^2=nD|f''(t_0)|$.
We thus obtain 
\begin{equation}\label{I ineq}
|I^\ab_n(x)|\le  2 e^{nf(t_0)} \left(nD|f''(t_0)|\right)^{-1/4}
\end{equation}
and hence (\ref{initial estimate}) has been improved to
$$
|g_n^\ab(x)|\le 
\left(\frac{(n+1)(n+\alpha+\beta+1)}{(n+\alpha+1)(n+\beta+1)}\right)^{1/4}
2\left(nD|f''(t_0)|\right)^{-1/4} .
$$
From (\ref{fprime}), (\ref{t0 differences}) and (\ref{t2-t1})
it follows that
\begin{equation}\label{eq f''}
f''(t_0)=-\frac{a+b+2}{2(t_0-t_1)(t_2-t_0)}=
-\frac{(a+b+1)(a+b+2)}{2(a+1)(b+1)}(1-x^2),
\end{equation}
and hence
$$
|f''(t_0)|=
\frac{(\alpha+\beta+n)(\alpha+\beta+2n)}{2(\alpha+n)(\beta+n)}
(1-x^2).
$$
Since
$$
\frac{n+\alpha+\beta+1}{(n+\alpha+1)(n+\beta+1)}
\le 
\frac{n+\alpha+\beta}{(n+\alpha)(n+\beta)}
$$
and
$$
\frac{n+1}{n(2n+\alpha+\beta)}\le \frac{3}{2n+\alpha+\beta+1}
$$
for all $n\ge 1$ and $\alpha,\beta\ge 0$,
it finally follows that
$$
|g_n^\ab(x)|\le C'(\alpha+\beta+2n+1)^{-1/4}(1-x^2)^{-1/4}
$$
where $C'=2\sqrt[4]{6/D}=2\sqrt[4]{168}<8$
with the value $D=1/28$ from below. 
This completes the proof of Theorem
\ref{thm1} in the integral case
(up to the cited results from below).\qed

\begin{Prop}\label{lemma bound f}
Fix $x\in[-1,1]$ and let $f(t)$ and $t_0$ be as above. Then
\begin{equation*}
f(t)\leq f(t_0)+\frac1{28(1+t_0^2)} f''(t_0)(t-t_0)^2
\end{equation*}
for all $t\in[-1,1]$.
\end{Prop}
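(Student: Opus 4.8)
The goal is to bound the concave function $f(t)$ from above by a parabola that agrees with $f$ to second order at the maximum $t_0$, but with the second-order coefficient deflated by the explicit factor $\frac1{28(1+t_0^2)}$. Since $f$ is concave with $f'(t_0)=0$, we already have $f(t)\le f(t_0)$; the content is the \emph{quantitative} quadratic decay. My first move is to write everything in terms of the single variable $s=t-t_0$, so that the claim becomes
$$
f(t_0+s)-f(t_0)\le \frac{1}{28(1+t_0^2)}f''(t_0)s^2.
$$
Using (\ref{eq f}), the left-hand side is an explicit sum of two logarithms,
$$
\frac{a+1}{2}\ln\!\frac{t_2-t_0-s}{t_2-t_0}+\frac{b+1}{2}\ln\!\frac{t_0-t_1+s}{t_0-t_1},
$$
and with the abbreviations $p=t_2-t_0$, $q=t_0-t_1$ from (\ref{t0 differences}) this is
$$
\frac{a+1}{2}\ln\!\Bigl(1-\tfrac{s}{p}\Bigr)+\frac{b+1}{2}\ln\!\Bigl(1+\tfrac{s}{q}\Bigr).
$$

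**Key reduction.** The natural tool is the elementary one-variable inequality $\ln(1+y)\le y-\tfrac12 y^2+\tfrac13 y^3$ valid on a suitable range, or more robustly the exact bound $\ln(1+y)\le y-\lambda y^2$ for $y$ in a bounded interval with an explicit $\lambda$. Since $f'(t_0)=0$ forces the linear terms $-\tfrac{a+1}{2}\tfrac{s}{p}+\tfrac{b+1}{2}\tfrac{s}{q}$ to cancel (this is precisely the defining property of $t_0$), what survives after applying such a bound is a negative multiple of $s^2$ plus cubic corrections. Recalling from (\ref{eq f''}) that $f''(t_0)=-\tfrac{a+b+2}{2pq}$, I expect the second-order term to reproduce a constant times $f''(t_0)s^2$, and the whole problem reduces to showing that the quadratic coefficient dominates the cubic remainder over the interval $t\in[-1,1]$, uniformly in $a,b\ge0$ and $x$, with enough room to spare to reach the constant $\tfrac1{28}$.

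**The main obstacle.** The delicate point is the factor $1+t_0^2$ in the denominator and the restriction $t\in[-1,1]$ rather than $t\in[t_1,t_2]$. When $|t_0|$ is large (which happens as $x\to\pm1$ or as $a+b\to\infty$, by (\ref{defi t0})), the endpoints $\pm1$ can lie far out on the concave graph where the quadratic lower envelope is weakest relative to the true decay; the role of $1+t_0^2$ is exactly to absorb this. I anticipate the argument will split into the regime where $|t_0|$ is bounded—where a direct Taylor estimate with controlled cubic term suffices—and the regime of large $|t_0|$, where one must use that $t$ ranges only over $[-1,1]$ so that $|s|=|t-t_0|$ is comparable to $|t_0|$, making $s^2/(1+t_0^2)$ bounded and letting the concavity of $f$ do the work directly. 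Managing the ratios $s/p$ and $s/q$ so that they stay inside the validity range of the logarithmic inequality, while tracking how $p$, $q$, and $t_0$ depend on $a,b,x$ through (\ref{second t1,t2 expression})–(\ref{defi t0}), is where the real bookkeeping lies; the specific constant $28$ is then whatever the worst case of these estimates forces, and I would verify it at the end rather than aim for it from the start.
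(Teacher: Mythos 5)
Your write-up is a plan rather than a proof: the two steps that carry all the difficulty are announced but not executed. First, after the linear terms cancel you are left with showing that the negative quadratic term in the expansion of $\tfrac{a+1}{2}\ln\bigl(1-\tfrac sp\bigr)+\tfrac{b+1}{2}\ln\bigl(1+\tfrac sq\bigr)$ dominates the cubic and higher remainder \emph{uniformly} in $a,b\ge0$ and $x\in[-1,1]$, with the precise loss factor $\tfrac1{28(1+t_0^2)}$; this is exactly where the content of the proposition lies, and no estimate is supplied (note that the naive bound $\ln(1+y)\le y-\tfrac12y^2$ is false on one side of $0$, so the cubic correction is unavoidable and must be beaten). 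Second, your treatment of the large-$|t_0|$ regime does not work as stated: concavity only yields $f(t)\le f(t_0)$, whereas the claim requires $f(t)\le f(t_0)+\tfrac{1}{28(1+t_0^2)}f''(t_0)(t-t_0)^2$ with a strictly negative correction which, precisely because $(t-t_0)^2/(1+t_0^2)$ is bounded below in that regime, is of size comparable to $|f''(t_0)|$. Extracting that definite drop of $f$ between $t_0$ and $t$ again needs quantitative control of $f''$ away from $t_0$, so concavity cannot ``do the work directly.'' The constant $28$ is likewise deferred rather than derived.

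For comparison, the paper avoids Taylor expansion altogether by exploiting the exact rational form (\ref{fprime}) of the derivative: $\frac{f'(u)}{u-t_0}=-\frac{a+b+2}{2(u-t_1)(t_2-u)}$ identically, so after writing $f(t)-f(t_0)=\int_{t_0}^t f'(u)\,du$ the proposition reduces to the purely algebraic inequality $(u-t_1)(t_2-u)\le 14(1+t_0^2)(t_0-t_1)(t_2-t_0)$ for $u$ between $t_0$ and $[-1,1]$. That inequality rests on the lower bounds $t_2-t_0,\,t_0-t_1\ge\frac14(1+t_0^2)^{-1/2}$, obtained from an identity for $t_0$ and the estimate $1-x^2\le 16\frac{(a+1)(b+1)}{(a+b+2)^2}(1+t_0^2)$. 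Any completion of your route would need uniform control of $s/p$ and $s/q$ of essentially this kind, so you may as well adopt the integral-of-$f'$ formulation, which eliminates the remainder bookkeeping entirely.
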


\begin{proof}
We begin the proof by a sequence of lemmas. 

\begin{Lemma}\label{lemma t0} 
The following relation holds
\begin{equation}\label{t0 equation}
(a+b)^2+4(a+b+1)t_0^2=\frac{2a^2}{1-x}+\frac{2b^2}{1+x}.
\end{equation}
\end{Lemma}

\begin{proof}
Using (\ref{defi t0}) we obtain 
$$4(a+b+1)t_0^2= \frac{(a-b+(a+b)x)^2}{1-x^2}.$$
On the other hand
$$\frac{2a^2}{1-x}+\frac{2b^2}{1+x}
=\frac{2(a^2+b^2+(a^2-b^2)x)}{1-x^2}.$$ 
Hence (\ref{t0 equation}) follows from the identity
$$
(a+b)^2(1-x^2)+(a-b+(a+b)x)^2=2(a^2+b^2+(a^2-b^2)x),
$$
which is straightforward.\qed
\end{proof}

\begin{Lemma}\label{l2} 
We have
$$1-x^2\le 16\frac{(a+1)(b+1)}{(a+b+2)^2}(1+t_0^2)$$
for all $x\in[-1,1]$.
\end{Lemma}

\begin{proof}
Note first that if we replace the triple $(a,b,x)$ by 
$(b,a,-x)$, then $t_1,t_0,t_2$ are replaced by 
$-t_2,-t_0,-t_1$ and hence the asserted inequality
is unchanged. We may thus assume that $a\le b$.

It follows from Lemma \ref{lemma t0} that
$$(a+b)^2+4(a+b+1)t_0^2 \ge \frac{2b^2}{1+x}$$
and therefore
$$1+x\ge \frac{2b^2}{(a+b)^2+4(a+b+1)t_0^2}.$$
Hence
$$1-x\le 2-
 \frac{2b^2}{(a+b)^2+4(a+b+1)t_0^2}
=
 2\frac{a^2+2ab+4(a+b+1)t_0^2}{(a+b)^2+4(a+b+1)t_0^2}.$$
and
$$
1-x^2\le 2(1-x)\le 4
\frac{a^2+2ab+4(a+b+1)t_0^2}{(a+b)^2+4(a+b+1)t_0^2}.$$
Since the right hand side is an increasing
function of $t_0^2$ we have for
$t_0^2\le 1$ that
$$
1-x^2\le 4\frac{a^2+2ab+4(a+b+1)}{(a+b)^2+4(a+b+1)}
\le 16\frac{(a+1)(b+1)}{(a+b+2)^2},$$
where in the last step we used that $a\le b$ implies
$a^2+2ab\le 4ab$.
For $t_0^2\ge 1$ we obtain similarly
$$
1-x^2\le 4\frac{(a^2+2ab)t_0^2+4(a+b+1)t_0^2}{(a+b)^2+4(a+b+1)}
\le 16\frac{(a+1)(b+1)}{(a+b+2)^2}t_0^2.$$
This completes the proof of Lemma \ref{l2}.\qed\end{proof}

\begin{Lemma}\label{l3} 
We have
\begin{equation}\label{l3-ineq}
t_2-t_0\ge \frac{1}{4(1+t_0^2)^{1/2}} 
\qquad\text{and}\qquad 
t_0-t_1\ge \frac{1}{4(1+t_0^2)^{1/2}}.
\end{equation}
\end{Lemma}

\begin{proof} 
It follows from (\ref{t2-t1}) and Lemma \ref{l2} that
$$t_2-t_1
\ge \frac{(a+b+2)^2}{4\sqrt{(a+1)(b+1)(a+b+1)}}
(1+t_0^2)^{-1/2},
$$
and hence by (\ref{t0 differences})
$$
t_2-t_0\ge 
\frac{\sqrt{a+1}(a+b+2)}{4\sqrt{(b+1)(a+b+1)}}
(1+t_0^2)^{-1/2}.
$$
Using $(b+1)(a+b+1)\le (a+b+2)^2$ and $\sqrt{a+1}\ge 1$
we obtain the first 
inequality in (\ref{l3-ineq}). The second one is
analogous.\qed
\end{proof}

\begin{Lemma}\label{l4} 
We have
\begin{equation}\label{l4 ineq}
(u-t_1)(t_2-u)\le 14 (1+t_0^2)(t_0-t_1)(t_2-t_0)
\end{equation}
for all $u\in [t_1,t_2]$ for which
$-1\le u\le t_0$ or $t_0\le u\le 1$.
\end{Lemma}

\begin{proof}
We first assume
$a\le b$.
Then by (\ref{t0 differences})
\begin{equation}\label{one}
u-t_1\le t_2-t_1=\frac{a+b+2}{b+1}(t_0-t_1)\le 2(t_0-t_1).
\end{equation}
In order to estimate $t_2-u$ we first note that
$|u-t_0|\le 1+|t_0|$
and hence
$$t_2-u\le t_2-t_0+|t_0-u|\le t_2-t_0+1+|t_0|.$$
By Lemma \ref{l3}
$$
1+|t_0|\le \sqrt{2}(1+t_0^2)^{1/2}\le 
4\sqrt{2}(1+t_0^2)(t_2-t_0)
$$
and hence
\begin{equation}\label{two}
t_2-u\le (1+4\sqrt2)(1+t_0^2)(t_2-t_0)
\le 7(1+t_0^2)(t_2-t_0).
\end{equation}
Now (\ref{one}) and (\ref{two}) together
imply (\ref{l4 ineq}). The proof for $a\ge b$ is 
analogous.\qed
\end{proof}

We can now prove Proposition \ref{lemma bound f}. Let $t\in[-1,1]$.
It follows from (\ref{fprime}),  (\ref{l4 ineq}) 
and (\ref{eq f''}) that 
$$
\begin{aligned}
\frac{f'(u)}{u-t_0}&=-\frac{a+b+2}{2(u-t_1)(t_2-u)}\\
&\le -\frac{a+b+2}{28(1+t_0^2)(t_0-t_1)(t_2-t_0)}=
 \frac{f''(t_0)}{14(1+t_0^2)}
\end{aligned}
$$
for all $u\in\R$ between $t$ and $t_0$.
Hence
$$
\begin{aligned}
f(t)&=f(t_0)+\int_{t_0}^t f'(u)\,du\\&\le
f(t_0)+ \frac{f''(t_0)}{14(1+t_0^2)}
\int_{t_0}^t (u-t_0)\,du=
f(t_0)+\frac{f''(t_0)}{28(1+t_0^2)}(t-t_0)^2.
\qquad\square
\end{aligned}
$$
\end{proof}

\begin{Lemma}\label{bound on exp integral}
Let $u,v\in\R$ with $u^2+v^2>0$. Then  
\begin{equation}\label{from Tim}
\frac1{\pi}
\int_0^{\pi} e^{-(u+v\cos s)^2}\,ds\leq \frac2{(u^2+v^2)^{1/4}}
\end{equation}
\end{Lemma}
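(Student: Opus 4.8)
The plan is to discard the Gaussian in favour of a rational integrand that can be evaluated in closed form. First I would exploit the two symmetries of the problem: the substitution $s\mapsto\pi-s$ shows that the left-hand side is unchanged under $v\mapsto-v$, while the integrand depends on $(u,v)$ only through $(u+v\cos s)^2$ and is therefore unchanged under $(u,v)\mapsto(-u,-v)$. Since the right-hand side $2(u^2+v^2)^{-1/4}$ shares both symmetries, it suffices to treat $u,v\ge0$.

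Next I would apply the elementary inequality $e^{-\phi^2}\le(1+\phi^2)^{-1}$ (equivalently $e^{\phi^2}\ge1+\phi^2$) with $\phi=u+v\cos s$, and symmetrise $\int_0^\pi=\tfrac12\int_0^{2\pi}$, to obtain
$$\frac1\pi\int_0^\pi e^{-(u+v\cos s)^2}\,ds\le I:=\frac1{2\pi}\int_0^{2\pi}\frac{ds}{1+(u+v\cos s)^2}.$$
The gain is that $I$ is a rational trigonometric integral. Writing $1+(u+v\cos s)^2=(v\cos s+u-i)(v\cos s+u+i)$ and splitting into partial fractions over $\C$, each summand is evaluated by the standard formula $\frac1{2\pi}\int_0^{2\pi}(p+v\cos s)^{-1}\,ds=(p^2-v^2)^{-1/2}$, valid for complex $p\notin[-v,v]$ with the branch of the square root having positive real part. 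Setting $P=u^2-v^2-1$ and $R=\sqrt{P^2+4u^2}=\sqrt{(u^2-v^2)^2+2(u^2+v^2)+1}$, a short computation with $\zeta=\sqrt{(u+i)^2-v^2}$ collapses the two conjugate terms to
$$I=\frac{\mathrm{Im}\,\zeta}{|\zeta|^2}=\frac{\sqrt{R-P}}{\sqrt2\,R}.$$

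It then remains to prove the purely algebraic inequality $I\le2(u^2+v^2)^{-1/4}$, i.e. $I^4(u^2+v^2)\le16$. Using the identity $R^2-P^2=4u^2$ (so that $R-P=4u^2/(R+P)$), this reduces, after clearing denominators and extracting a square root, to an inequality of the shape $(\sigma+\delta)\sqrt\sigma\le4R^2(R+P)$, where $\sigma=u^2+v^2$ and $\delta=u^2-v^2$. I would dispatch this by the case split $P\ge0$ versus $P<0$ (equivalently $u^2\ge v^2+1$ or not), using only the crude lower bounds $R\ge\sqrt{2\sigma}$, $R\ge|P|$ and $R\ge1$ together with $\delta\le\sigma$; in each case the estimate then follows in one line, and the degenerate situations $u=0$ or $v=0$ are checked directly.

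The step I expect to be the real obstacle is not the bookkeeping above but the worry that the rational majorant $I$ is too lossy to recover the exponent $1/4$. In fact it is not: the extremal direction is $u=v\to\infty$, where $I\sim\tfrac12u^{-1/2}$ against a right-hand side $\sim2^{3/4}u^{-1/2}$, so the inequality holds with a comfortable margin (the supremum of $I(u^2+v^2)^{1/4}$ is well below $1$), and the stated constant $2$ is in no danger.
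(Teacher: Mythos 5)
Your argument is correct, but it is genuinely different from the one in the paper. The paper's proof uses the same two symmetries to reduce to $u,v\ge 0$, but then splits into the three cases $u\le v$, $v\le u\le 2v$ and $2v\le u$; in the first two it factors $u-v\cos s=2v\sin(\tfrac{s+\sigma}2)\sin(\tfrac{s-\sigma}2)$ with $\cos\sigma=u/v$, minorizes the sines by linear functions, and reduces to the quartic Gaussian integral $\int_{-\infty}^{\infty}e^{-ct^4}\,dt$ evaluated via $\Gamma(\tfrac54)\le1$; this yields the slightly stronger bound $\sqrt2\,\max\{|u|,|v|\}^{-1/2}$. Your route instead majorizes $e^{-\phi^2}$ by the Lorentzian $(1+\phi^2)^{-1}$ and evaluates the resulting rational trigonometric integral in closed form, $I=\frac{\sqrt{R-P}}{\sqrt2\,R}$ with $P=u^2-v^2-1$, $R=\sqrt{P^2+4u^2}$, reducing everything to an explicit algebraic inequality; I have checked that your reductions and the crude bounds $R\ge\sqrt{2\sigma}$, $R\ge|P|$, $\delta\le\sigma$ do close the two cases $P\ge0$ and $P<0$ in one line each, and that the Lorentzian loss is indeed harmless (the supremum of $I(u^2+v^2)^{1/4}$ is about $0.72$). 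What each approach buys: the paper's gets a sharper intermediate bound in terms of $\max\{|u|,|v|\}$ at the cost of a three-way case split and a quartic-Gaussian estimate, while yours trades sharpness for an exact closed-form majorant and a purely algebraic finish. The only point needing a word of care in a write-up is the branch of $\sqrt{p^2-v^2}$: for $p=u\pm i$ with $u>0$ the branch asymptotic to $p$ at infinity does have positive real part, but at $u=0$ it is purely imaginary, so that case must indeed be checked directly as you indicate (and there $I=(1+v^2)^{-1/2}$, which satisfies the bound).
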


\smallskip
\begin{proof} 
We will show (\ref{from Tim}) with the slightly stronger bound
$$\frac{\sqrt2}{\sqrt{\max\{|u|,|v|\}}}.$$
The statement is invariant
under the map $(u,v) \mapsto (-u,-v)$ and, using the substitution 
$s \mapsto \pi -s$, also under $v \mapsto -v$. 
Hence, it is sufficient to show
$$
\frac1{\pi}
\int_0^{\pi} e^{-(u-v\cos s)^2}\,ds\leq \frac{\sqrt2}{\sqrt{\max\{u,v\}}}
$$
for $u \geq 0$, $v \geq 0$. 

Suppose first $0 \leq u \leq v$, then  $v\neq0$.
Let $\sigma \in [0,\frac{\pi}{2}]$ be such that
$\cos{\sigma}=\frac{u}{v}$. Then
$$u-v\cos{s}=v(\cos\sigma-\cos s)=
2v\sin(\frac{s+\sigma}{2})\sin(\frac{s-\sigma}{2}).$$ 
Note that
$\sin(\frac{s+\sigma}{2}) \geq |\sin(\frac{s-\sigma}{2})|$ 
because
$\sin^2(\frac{s+\sigma}{2})-\sin^2(\frac{s-\sigma}{2})
=\sin s\sin\sigma \geq 0$
for 
$s \in [0,\pi]$
and $\sigma \in [0,\frac{\pi}{2}]$.
Using
also that $|\sin t| \geq \frac{2}{\pi}|t|$ for $|t| \leq \frac{\pi}{2}$, 
it
follows that
\begin{equation} \nonumber
\begin{split}
\frac{1}{\pi} \int_{0}^{\pi} e^{-(u-v\cos{s})^2} ds &= \frac{1}{\pi}
\int_{0}^{\pi} e^{-4v^2 \sin^2(\frac{s+\sigma}{2})\sin^2(\frac{s-\sigma}{2})} ds
\\
&\leq \frac{1}{\pi} \int_{0}^{\pi} e^{-4v^2\pi^{-4}(s-\sigma)^4}ds \\
&\leq \frac{1}{\pi} \int_{-\infty}^{\infty} e^{-4v^2\pi^{-4}s^4}ds
\le \frac{2}{\sqrt{2v}} ,
\end{split}
\end{equation}
where we used that 
$\int_0^{\infty}e^{-t^4}\,dt=\Gamma(\frac{5}{4})\leq1$.

Suppose next $0 \leq v \leq u \leq 2v$. Then $u-v\cos{s} \geq
v(1-\cos{s}) = 2v\sin^2(\frac{s}{2})$. Hence,
\begin{equation} \nonumber
\begin{split}
\frac{1}{\pi} \int_{0}^{\pi} e^{-(u-v\cos{s})^2}ds &\leq \frac{1}{\pi}
\int_{0}^{\pi} e^{-4v^2\sin^4(\frac{s}{2})} ds \\
&\leq \frac{1}{\pi} \int_{0}^{\pi} e^{-4v^2\pi^{-4}s^4} ds
\leq \frac{1}{\sqrt{2v}}\leq\frac1{\sqrt{u}}
\end{split}
\end{equation}
using again $\int_0^{\infty}e^{-t^4}\,dt\leq1$.

Suppose finally $0 \leq 2v \leq u$. Then $u-v\cos{s} \geq \frac u2$ and
hence
\begin{equation*} 
\frac{1}{\pi} \int_{0}^{\pi} e^{-(u-v\cos{s})^2}ds
\leq e^{-\frac{u^2}{4}} \leq \frac1{\sqrt{u}}
\end{equation*}
where we used that $xe^{-x^4}\leq \frac1{\sqrt2}$ for all $x\ge 0$.
\qed\end{proof}

\section{Some inequalities with gamma functions}\label{gamma inequalities}

In this section we prove some inequalities which were used in the preceding
section. We assume that $\alpha,\beta$ are real and non-negative.

\begin{Lemma}\label{Stirling bound}
Let $n, \alpha,\beta\ge 0$. Then
\begin{equation}\label{Stirling ineq}
\begin{split}
&\frac{\Gamma(n+1)\Gamma(n+\alpha+\beta+1)}
{\Gamma(n+\alpha+1)\Gamma(n+\beta+1)}\\
&\qquad\qquad\le
\frac{n^n(\alpha+\beta+n)^{\alpha+\beta+n}}
{(\alpha+n)^{\alpha+n}(\beta+n)^{\beta+n}}\,
\left(\frac{(n+1)(n+\alpha+\beta+1)}{(n+\alpha+1)(n+\beta+1)}\right)^{1/2}.
\end{split}
\end{equation}
\end{Lemma}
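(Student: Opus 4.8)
The plan is to take logarithms and recognize both sides of (\ref{Stirling ineq}) as mixed second differences of one and the same function, so that the inequality turns into a concavity statement. Writing
$$h(x)=\log\Gamma(x+1),\qquad k(x)=x\log x,\qquad \ell(x)=\log(x+1)$$
(with $k(0)=0$), the logarithm of the left-hand side of (\ref{Stirling ineq}) is
$$h(n)-h(n+\alpha)-h(n+\beta)+h(n+\alpha+\beta),$$
while the logarithm of the right-hand side is the same expression with $h$ replaced by $k+\tfrac12\ell$. Hence, putting $\Psi=h-k-\tfrac12\ell$, the assertion (\ref{Stirling ineq}) is equivalent to
$$\Psi(n)-\Psi(n+\alpha)-\Psi(n+\beta)+\Psi(n+\alpha+\beta)\le 0.$$

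Next I would turn this mixed difference into a double integral. Since $\partial_u\partial_v\,\Psi(n+u+v)=\Psi''(n+u+v)$ and the left-hand side vanishes whenever $\alpha=0$ or $\beta=0$, integrating twice gives
$$\Psi(n)-\Psi(n+\alpha)-\Psi(n+\beta)+\Psi(n+\alpha+\beta)=\int_0^\alpha\!\!\int_0^\beta \Psi''(n+u+v)\,du\,dv.$$
It therefore suffices to prove $\Psi''(x)\le 0$ for all $x>0$. A direct computation yields
$$\Psi''(x)=\psi_1(x+1)-\frac1x+\frac1{2(x+1)^2},$$
where $\psi_1=(\log\Gamma)''$ denotes the trigamma function.

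This trigamma inequality is the heart of the matter, and I expect it to be the main obstacle. I would establish it from the classical Laplace representation $\psi_1(x+1)=\int_0^\infty \frac{t\,e^{-xt}}{e^t-1}\,dt$, together with $\frac1x=\int_0^\infty e^{-xt}\,dt$ and $\frac1{2(x+1)^2}=\frac12\int_0^\infty t\,e^{-(x+1)t}\,dt$. These combine to
$$\Psi''(x)=\int_0^\infty e^{-xt}\Big(\frac{t}{e^t-1}-1+\frac t2\,e^{-t}\Big)\,dt,$$
so it is enough to show the bracket is $\le 0$ for every $t>0$. Multiplying by $e^t-1>0$, this reduces to the elementary claim $H(t):=\tfrac{3t}2+1-e^t-\tfrac t2 e^{-t}\le 0$; and since $H(0)=H'(0)=0$ while $H''(t)=-2\sinh t-\tfrac t2 e^{-t}\le 0$ for $t\ge 0$, the function $H$ is nonpositive, as needed.

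Finally I would dispose of the degenerate cases. For $\alpha=0$ or $\beta=0$ both sides of (\ref{Stirling ineq}) equal $1$, so equality holds; and for $n=0$ one notes that $\Psi''(x)\sim -1/x$ near $x=0$ is integrable, so the integral representation and the conclusion $\Psi''\le 0$ persist up to the boundary. This yields (\ref{Stirling ineq}) in full generality.
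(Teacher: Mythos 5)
Your proof is correct, and its skeleton coincides with the paper's: after taking logarithms you write the mixed second difference as $\int_0^\alpha\int_0^\beta \Psi''(n+u+v)\,du\,dv$ and reduce everything to the pointwise inequality $(\ln\Gamma)''(x+1)\le \frac1x-\frac1{2(x+1)^2}$, which is exactly the paper's inequality (\ref{Gamma ineq}). Where you genuinely diverge is in the proof of that trigamma inequality. The paper writes $(\ln\Gamma)''(u+1)=\sum_{k\ge0}A(u+k)$ with $A(u)=(u+1)^{-2}$, expands $\frac1u$ and $\frac1{2(u+1)^2}$ as telescoping series $\sum B(u+k)$ and $\sum C(u+k)$, and verifies the termwise bound $B(u)-C(u)\ge A(u)$ by a short rational-function computation. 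You instead use the Laplace representation $\psi_1(x+1)=\int_0^\infty t e^{-xt}(e^t-1)^{-1}\,dt$ together with the elementary transforms of $\frac1x$ and $\frac1{2(x+1)^2}$, reducing the claim to the nonpositivity of $H(t)=\tfrac{3t}2+1-e^t-\tfrac t2e^{-t}$, which follows from $H(0)=H'(0)=0$ and $H''\le0$; I checked the algebra and it is right. The two routes are of comparable length: the paper's is more self-contained (only the series $\sum(u+k)^{-2}$ is needed, no integral representation to justify), while yours is arguably more systematic, since the Laplace-transform method would adapt mechanically to other candidate majorants. Your handling of the boundary cases ($\alpha=0$ or $\beta=0$ giving equality, and the integrable singularity of $\Psi''$ at $0$ when $n=0$) is adequate; the paper glosses over the same point.
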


\begin{proof} 
We have for $x,y,z\ge 0$ 
\begin{equation}\label{double integral}
\ln \frac{\Gamma(x+1)\Gamma(x+y+z+1)}{\Gamma(x+y+1)\Gamma(x+z+1)}
=
\int_0^y\int_0^z (\ln\Gamma)''(x+s+t+1)\,dt\,ds.
\end{equation}
We claim that
\begin{equation}\label{Gamma ineq}
(\ln\Gamma)''(u+1)\le \frac1u-\frac1{2(u+1)^2}
\end{equation}
for all $u>0$. The asserted inequality (\ref{Stirling ineq})
follows easily from (\ref{double integral}) and (\ref{Gamma ineq}).

In order to prove (\ref{Gamma ineq}) we recall that
$$
(\ln\Gamma)''(u+1)=\sum_{k=1}^{\infty} \frac1{(u+k)^2}
=\sum_{k=0}^{\infty} A(u+k),
$$
where $$A(u)=\frac1{(u+1)^2}.$$
For the other side of (\ref{Gamma ineq}) 
we use the telescoping series 
$$
\frac1u=\sum_{k=0}^{\infty} B(u+k),
\qquad\qquad
\frac1{2(u+1)^2}=\sum_{k=0}^{\infty} C(u+k),
$$
where 
$$B(u)=\frac1{u}-\frac1{u+1}=\frac1{u(u+1)}$$
and
$$C(u)=\frac1{2(u+1)^2}-\frac1{2(u+2)^2}=\frac{2u+3}{2(u+1)^2(u+2)^2}.$$
We observe that
$$C(u)\le \frac1{(u+1)^2(u+2)}$$
and hence
$$
B(u)-C(u)\ge \frac1{u(u+1)}-\frac1{(u+1)^2(u+2)}=
\frac{u^2+2u+2}{u(u+1)^2(u+2)}\ge A(u).
$$
We obtain (\ref{Gamma ineq}) by termwise application of this
inequality to the series.\qed
\end{proof}

\begin{Lemma}\label{new lemma 4.2}
For $\alpha,\beta\ge0$ 
$$
\frac{\Gamma(\alpha+\beta+1)}{\Gamma(\alpha+1)\Gamma(\beta+1)}
\le
\frac{(\alpha+\beta+\frac12)^{\alpha+\beta+\frac12}(\frac12)^{\frac12}}
{(\alpha+\frac12)^{\alpha+ \frac12}(\beta+\frac12)^{\beta+\frac12}}.
$$
\end{Lemma}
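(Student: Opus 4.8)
The plan is to reduce the inequality to a pointwise bound on the trigamma function and then reuse the integral representation already exploited in Lemma~\ref{Stirling bound}. Writing $\Phi(\alpha,\beta)=\ln\frac{\Gamma(\alpha+\beta+1)}{\Gamma(\alpha+1)\Gamma(\beta+1)}$, I would apply (\ref{double integral}) with $x=0$, $y=\alpha$, $z=\beta$, using $\Gamma(1)=1$, to obtain
$$
\Phi(\alpha,\beta)=\int_0^\alpha\int_0^\beta (\ln\Gamma)''(s+t+1)\,dt\,ds.
$$
The strategy is then to bound the integrand pointwise by the reciprocal $\frac1{s+t+\frac12}$ and to check that the resulting elementary double integral reproduces exactly the logarithm of the claimed right-hand side.

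The crux is the pointwise inequality
$$
(\ln\Gamma)''(u+1)\le \frac1{u+\frac12}\qquad(u\ge 0).
$$
Recalling that $(\ln\Gamma)''(u+1)=\sum_{k=1}^\infty \frac1{(u+k)^2}$, I would compare each term with $\frac1{(u+k)^2-\frac14}=\frac1{u+k-\frac12}-\frac1{u+k+\frac12}$, and sum the resulting telescoping series, whose value is precisely $\frac1{u+\frac12}$. Integrating this bound over the rectangle $(s,t)\in[0,\alpha]\times[0,\beta]$ yields $\Phi(\alpha,\beta)\le \int_0^\alpha\int_0^\beta \frac{dt\,ds}{s+t+\frac12}$. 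Note that this telescoping comparison is the natural analogue, tailored to a base point shifted by $\frac12$, of the estimate (\ref{Gamma ineq}) used in Lemma~\ref{Stirling bound}; here the integration begins at $u=0$, so the cruder bound $\frac1u-\frac1{2(u+1)^2}$ would not suffice and the reciprocal $\frac1{u+\frac12}$ is exactly what is needed.

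It remains to evaluate the double integral. The inner integration gives $\ln(s+\beta+\tfrac12)-\ln(s+\tfrac12)$, and with the antiderivative $\int \ln(s+c)\,ds=(s+c)\ln(s+c)-(s+c)$ the outer integration produces the four terms $(\alpha+\beta+\tfrac12)\ln(\alpha+\beta+\tfrac12)$, $-(\alpha+\tfrac12)\ln(\alpha+\tfrac12)$, $-(\beta+\tfrac12)\ln(\beta+\tfrac12)$ and $\tfrac12\ln\tfrac12$, while all the linear remainder terms cancel. This is exactly the logarithm of the stated upper bound, so the proof is complete. As a sanity check on the cancellation of the linear terms, both $\Phi$ and this logarithm vanish on the two coordinate axes, which forces any such double-integral representation to contain no surviving linear contribution. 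I do not anticipate a serious obstacle: the only step requiring genuine care is the pointwise trigamma bound, and the telescoping comparison renders even that immediate, leaving only a routine integration.
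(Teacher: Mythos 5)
Your proposal is correct and follows essentially the same route as the paper: the paper also deduces the lemma from the double-integral representation of Lemma~\ref{Stirling bound} (with base point $x=0$) together with the pointwise bound $(\ln\Gamma)''(u+1)\le\frac1{u+\frac12}$, proved by the identical telescoping comparison $\frac1{(u+k)^2}\le\frac1{(u+k-\frac12)(u+k+\frac12)}$. You merely write out explicitly the evaluation of the double integral, which the paper leaves to the reader.
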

\begin{proof}
Following the preceding proof one deduces this inequality 
from 
\begin{equation*}
(\ln\Gamma)''(u+1)\le \frac1{u+\frac12}.
\end{equation*}
The latter inequality is also seen as in
the preceding proof, by using the telescoping series
$$\frac1{u+\frac12}=\sum_{k=0}^\infty D(u+k)$$
where
$$D(u)=\frac1{u+\frac12}-\frac1{u+\frac32}=\frac1{(u+\frac12)(u+\frac32)}
\ge\frac1{(u+1)^2}=A(u).
$$
\qed\end{proof}

\begin{Lemma}\label{n=0}
Let $\alpha,\beta\ge 0$ and $-1\le x\le 1$. Then
$$0\le (1-x^2)^{1/4}g^\ab_0(x)\le (\alpha+\beta+1)^{-1/4}.$$
\end{Lemma}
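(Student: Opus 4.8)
The plan is to exploit that for $n=0$ the Jacobi polynomial is constant, $P_0^\ab\equiv 1$, so that $g_0^\ab(x)$ collapses to the elementary expression
$$g_0^\ab(x)=\left(\frac{\Gamma(\alpha+\beta+1)}{\Gamma(\alpha+1)\Gamma(\beta+1)}\right)^{1/2}\left(\frac{1-x}2\right)^{\alpha/2}\left(\frac{1+x}2\right)^{\beta/2}.$$
The lower bound $0\le(1-x^2)^{1/4}g_0^\ab(x)$ is then immediate, since every factor is non-negative for $x\in[-1,1]$ and $\alpha,\beta\ge0$. For the upper bound I would substitute $s=(1-x)/2\in[0,1]$, so that $(1+x)/2=1-s$ and $(1-x^2)^{1/4}=\sqrt2\,(s(1-s))^{1/4}$, turning the quantity to be estimated into
$$(1-x^2)^{1/4}g_0^\ab(x)=\sqrt2\left(\frac{\Gamma(\alpha+\beta+1)}{\Gamma(\alpha+1)\Gamma(\beta+1)}\right)^{1/2}s^{p}(1-s)^{q},$$
where $p=\tfrac\alpha2+\tfrac14$ and $q=\tfrac\beta2+\tfrac14$.

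Next I would remove the $x$-dependence by maximizing the power function over $s$. Since $p+q=(\alpha+\beta+1)/2>0$, the maximum of $s^p(1-s)^q$ on $[0,1]$ is attained at $s=p/(p+q)$ with value $p^pq^q/(p+q)^{p+q}$, giving a bound uniform in $x$. Squaring and using $2p=\alpha+\tfrac12$, $2q=\beta+\tfrac12$, $2(p+q)=\alpha+\beta+1$, the powers of $\tfrac12$ cancel, so the square of the right-hand side becomes
$$2\,\frac{(\alpha+\tfrac12)^{\alpha+1/2}(\beta+\tfrac12)^{\beta+1/2}}{(\alpha+\beta+1)^{\alpha+\beta+1}}\,\frac{\Gamma(\alpha+\beta+1)}{\Gamma(\alpha+1)\Gamma(\beta+1)}.$$

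The key step is then to feed the Gamma ratio into Lemma \ref{new lemma 4.2}, which is tailored precisely so that the factors $(\alpha+\tfrac12)^{\alpha+1/2}$ and $(\beta+\tfrac12)^{\beta+1/2}$ cancel completely. Writing $w=\alpha+\beta$, the entire bound reduces to the single-variable claim
$$\sqrt2\,\frac{(w+\tfrac12)^{w+1/2}}{(w+1)^{w+1}}\le(w+1)^{-1/2}\qquad(w\ge0),$$
equivalently, after setting $m=w+\tfrac12\ge\tfrac12$, to $\bigl(1+\tfrac1{2m}\bigr)^{m}\ge\sqrt2$. I would prove the latter by monotonicity: with $t=1/(2m)$ one has $m\ln(1+\tfrac1{2m})=\tfrac12\,\tfrac{\ln(1+t)}{t}$, and since $\ln(1+t)/t$ is decreasing in $t>0$ the left side increases in $m$; its value at the endpoint $m=\tfrac12$ is exactly $(1+1)^{1/2}=\sqrt2$, so the inequality holds for all $m\ge\tfrac12$, with equality exactly at $\alpha=\beta=0$.

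The computation is elementary throughout, so the main obstacle is not analytic difficulty but bookkeeping: one must match $p,q$ to $\alpha,\beta$ correctly so that the $\tfrac12$-powers and the $(\alpha+\tfrac12),(\beta+\tfrac12)$ factors cancel, which is exactly what makes Lemma \ref{new lemma 4.2} the right tool. Once the problem is reduced to $\bigl(1+\tfrac1{2m}\bigr)^m\ge\sqrt2$ the remaining monotonicity is standard, and the fact that equality occurs at $m=\tfrac12$ confirms the bound is sharp in Legendre's case $\alpha=\beta=0$.
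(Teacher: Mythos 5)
Your proposal is correct and follows essentially the same route as the paper: it uses $P_0^\ab\equiv1$, maximizes the resulting power function over $x$ (your substitution $s=(1-x)/2$ is only cosmetic), invokes Lemma \ref{new lemma 4.2} to cancel the $(\alpha+\tfrac12)$ and $(\beta+\tfrac12)$ factors, and reduces to a one-variable monotonicity statement. Your final inequality $(1+\tfrac1{2m})^m\ge\sqrt2$ is exactly the paper's claim $h(\alpha+\beta)\le h(0)=1$ in disguise, proved by an equivalent elementary monotonicity argument.
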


\begin{proof} 
Since $P^\ab_0(x)=1$, we have $g^\ab_0(x)\ge 0$ and
$$
(1-x^2)^{\frac12}g^\ab_0(x)^2=
\frac{2\Gamma(\alpha+\beta+1)}{\Gamma(\alpha+1)\Gamma(\beta+1)}
\left(\frac{1-x}2\right)^{\alpha+\frac12}
\left(\frac{1+x}2\right)^{\beta+\frac12}.
$$
For $\mu,\nu\ge 0$ the function $\varphi(x)=(1-x)^\mu(1+x)^\nu$
on $[-1,1]$ satisfies
$$\max_{x\in[-1,1]}\varphi(x)=\varphi\left(\frac{\nu-\mu}{\nu+\mu}\right)=
\frac{2^{\mu+\nu}\mu^\mu\nu^\nu}{(\mu+\nu)^{\mu+\nu}}.$$
Hence by Lemma \ref{new lemma 4.2}
\begin{equation}\label{max val}
\begin{aligned}
\max_{x\in[-1,1]} (1-x^2)^{\frac12}g^\ab_0(x)^2&=
\frac{2\Gamma(\alpha+\beta+1)}{\Gamma(\alpha+1)\Gamma(\beta+1)}
\frac{(\alpha+\frac12)^{\alpha+\frac12}(\beta+\frac12)^{\beta+\frac12}}
{(\alpha+\beta+1)^{\alpha+\beta+1}}\\
&\le h(\alpha+\beta) (\alpha+\beta+1)^{-1/2},
\end{aligned}
\end{equation}
where
$$
h(t)=\sqrt2 \left(\frac{t+\frac12}{t+1}\right)^{t+\frac12}.
$$
Since 
$$
(\log h)'(t)=\frac1{2(t+1)}+\log\left(\frac{t+\frac12}{t+1}\right)
=\int_{t+\frac12}^{t+1} (\frac1{t+1}-\frac1u)\,du \le 0$$
it follows that $h(t)\le h(0)=1$ for all $t\ge 0$.
This proves lemma \ref{n=0}.\qed
\end{proof}

\begin{Rem}\label{remark on Stirling} 
It follows from {\rm (\ref{max val})} and
Stirling's formula that 
$$\max (1-x^2)^{1/4}|g^\ab_0(x)| 
\sim
(2/\pi)^{1/4}(\alpha+\beta+1)^{-1/4}
$$
when $\alpha\to\infty$ and $\beta\to\infty$. Hence
the decay rate $1/4$ in Theorem \ref{thm1} cannot be
improved. This was observed already in \cite{NEM}, p.~604.

In this connection it can be noted that for each 
$l=0,\frac12,1,\dots$, the irreducible representation 
$\pi_l$ of $\SU(2)$ will exhibit matrix coefficients 
in which the functions $g^\ab_0$ for $\alpha+\beta=2l$
occur (see Section \ref{Section repth}). 
In particular, it follows that a positive solution to 
the EMN-conjecture mentioned in the introduction, will not 
significantly improve the representation theoretic
content of Theorem \ref{thm1}, discussed in 
Section~\ref{Section repth}.
\end{Rem}

\section{The general case}\label{Section general}

In this section $n\in\N_0$ and $\alpha, \beta$ are non-negative
real numbers. We have already proved in Lemma \ref{n=0} that
$$
|g^\ab_0(x)|\le  (\alpha+\beta+1)^{-1/4}, \quad
x\in[-1,1],\,\alpha,\beta\ge 0,
$$
so we can assume that $n>0$. As in Section \ref{Section integral}
we put $a=\alpha/n$ and $b=\beta/n$ and use the integral representation 
(\ref{integral formula})-(\ref{In}) of $\jacobi(x)$, with a
closed contour $\gamma(x)$ encircling $x$ in the positive
direction. In addition we assume now that $\gamma(x)$ does not
intersect the branch cuts $]-\infty,-1]$ and $[1,\infty[$.
As before we define $r>0$ by (\ref{defi r}) and consider
the circle $C(x,r)$. For $|x|<1$ we find
$$
1<x+r\quad\Leftrightarrow\quad x>\frac{a+b}{a+b+2},
$$
and consequently
$$
-1>x-r\quad\Leftrightarrow\quad x<-\frac{a+b}{a+b+2}.
$$
Hence we can distinguish the following cases:

\begin{description}
\item[Case 1] $\frac{a+b}{a+b+2}<x<1$. Then $1$ is inside 
and $-1$ is outside $C(x,r)$. 
\item[Case 2] $|x|<\frac{a+b}{a+b+2}$. Both $1$ and $-1$ 
are outside $C(x,r)$. 
\item[Case 3] $-1<x<-\frac{a+b}{a+b+2}$. Here $1$ is outside 
and $-1$ is inside $C(x,r)$. 
\end{description}

By continuity it suffices to prove Theorem \ref{thm1} in each of these
three cases. As the proof given in Section \ref{Section integral}
is valid without modification in Case 2, we need only consider the 
other two cases. Note that the integral
$$
J^\ab_n(x):=\frac1{2\pi i} \int_{C(x,r)} 
\frac{(1-z)^{n+\alpha}(1+z)^{n+\beta}}{(z-x)^{n+1}}\,dz
$$
makes sense for all $\alpha,\beta\ge 0$, although the argument of
the integrand
may become discontinuous at $z=x+r$ or at $z=x-r$ when these
points belong to the branch cuts. As in Section~\ref{Section integral},
see (\ref{initial}),
$$
|J^\ab_n(x)|\le \frac1{\pi} \int_{0}^{\pi}
e^{n f(\cos\theta)}\,d\theta
$$
where $f$ is the function defined by (\ref{defi f}).
Note that $f$ depends on $a$, $b$ and $x$. When necessary we 
denote it by $f=f_{a,b,x}$.

\begin{Lemma}\label{remainder estimates} 
The integral {\rm (\ref{In})} satisfies
\begin{equation}\label{I=J+R}
I^\ab_n(x)=J^\ab_n(x)+R^\ab_n(x)
\end{equation}
where
$|R^\ab_n(x)|\le e^{n f(1)}$ in Case 1, 
$R^\ab_n(x)=0$ in Case 2, and
$|R^\ab_n(x)|\le e^{n f(-1)}$ in Case 3.
\end{Lemma}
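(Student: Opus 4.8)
The plan is to identify $R^\ab_n(x)=I^\ab_n(x)-J^\ab_n(x)$ as the contribution of the portion of a branch cut that the circle $C(x,r)$ encloses. Both integrals have the same integrand $F(z)=(1-z)^{n+\alpha}(1+z)^{n+\beta}(z-x)^{-n-1}$, which is single valued and holomorphic on the cut plane $\C\setminus(\{x\}\cup[1,\infty)\cup(-\infty,-1])$ and has there a single pole, at $z=x$. In Case 2 the circle lies in this region and is homotopic there to a small positively oriented loop about $x$, so Cauchy's theorem gives $J^\ab_n(x)=I^\ab_n(x)$ and $R^\ab_n(x)=0$. In Case 1 the circle instead meets the cut $[1,\infty)$ at its rightmost point $z=x+r>1$ and encloses the branch point $z=1$; Case 3 is the mirror situation, with $(-\infty,-1]$ met at $z=x-r<-1$. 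Since the substitution $(a,b,x)\mapsto(b,a,-x)$ interchanges the two cases and swaps $f(1)$ with $f(-1)$, I would treat Case 1 and deduce Case 3 by this symmetry.

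For Case 1 I would deform the defining contour of $I^\ab_n(x)$ out to the circle. Replacing the arc of $C(x,r)$ near $z=x+r$ by a keyhole that runs along the two banks of the cut to $z=1$, around the branch point, and back keeps the contour inside the cut plane while still winding once about $x$ and not at all about $z=1$; its integral is therefore still $I^\ab_n(x)$. The difference from the full circle is exactly the keyhole integral around $[1,x+r]$. On that segment the factors $(1+z)^{n+\beta}$ and $(z-x)^{-n-1}$ are continuous, whereas $(1-z)^{n+\alpha}$ changes by $e^{\mp i\pi(n+\alpha)}$ between the upper and lower banks, and the small circle about $z=1$ contributes nothing because $n+\alpha>0$. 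Collapsing the keyhole onto the cut gives
\[
R^\ab_n(x)=-\frac{\sin\pi(n+\alpha)}{\pi}\int_1^{x+r}\frac{(t-1)^{n+\alpha}(1+t)^{n+\beta}}{(t-x)^{n+1}}\,dt,
\]
where $|\sin\pi(n+\alpha)|=|\sin\pi\alpha|\le1$ since $n$ is an integer.

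It remains to bound this integral by $e^{nf(1)}=((x+r)-1)^{n+\alpha}((x+r)+1)^{n+\beta}r^{-n}$, which is $r$ times the value of the integrand at the crossing point $t=x+r$. First I would bound $(1+t)^{n+\beta}\le((x+r)+1)^{n+\beta}$ and pull this factor out. The linchpin is then the substitution $u=(t-1)/(t-x)$, which sends $[1,x+r]$ to $[0,U]$ with $U=((x+r)-1)/r$ and turns the remaining integral into
\[
\int_1^{x+r}\frac{(t-1)^{n+\alpha}}{(t-x)^{n+1}}\,dt=(1-x)^\alpha\int_0^U u^{n+\alpha}(1-u)^{-\alpha-1}\,du.
\]
Bounding $u^{n+\alpha}\le U^{n+\alpha}$ and integrating $(1-u)^{-\alpha-1}$ gives at most $(1-x)^\alpha U^{n+\alpha}\alpha^{-1}(1-U)^{-\alpha}$; substituting $U=((x+r)-1)/r$ and $1-U=(1-x)/r$ collapses this to $\alpha^{-1}((x+r)-1)^{n+\alpha}r^{-n}$. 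Multiplying by $((x+r)+1)^{n+\beta}$ and by $\pi^{-1}|\sin\pi\alpha|$ and using $|\sin\pi\alpha|\le\pi\alpha$ yields exactly $|R^\ab_n(x)|\le e^{nf(1)}$; the case $\alpha=0$ is trivial since then $R^\ab_n(x)=0$.

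The main obstacle is the estimate in the last step, and the point I would stress is why the naive bound fails. The real-axis integrand does not attain its maximum at the crossing point $t=x+r$: it vanishes like $(t-1)^{n+\alpha}$ at $t=1$, rises, and can overshoot its endpoint value by an arbitrarily large factor when $x$ is near $1$, so bounding it by its endpoint value times the length of the interval is hopeless. What saves the argument is the cancellation exposed by the substitution $u=(t-1)/(t-x)$: the near-singular weight $(1-u)^{-\alpha-1}$ produces a factor $\alpha^{-1}$ that is precisely absorbed by the prefactor $\pi^{-1}\sin\pi(n+\alpha)$, which is itself of size $\alpha$ when $\alpha$ is small. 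Getting this cancellation to land on the nose, rather than with an extraneous constant, is the only delicate part; everything else is a routine contour deformation.
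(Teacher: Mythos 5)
Your proposal is correct and follows essentially the same route as the paper: the same keyhole deformation around the cut $[1,x+r]$ producing $R^\ab_n(x)=-\frac{\sin(\pi(n+\alpha))}{\pi}\int_1^{x+r}(t-1)^{n+\alpha}(1+t)^{n+\beta}(t-x)^{-n-1}\,dt$, the same reduction of Case 3 to Case 1 by $(a,b,x)\mapsto(b,a,-x)$, and the same final cancellation $|\sin\pi\alpha|/(\pi\alpha)\le 1$. Your substitution $u=(t-1)/(t-x)$ with the bound $u\le U=(r+x-1)/r$ is just a repackaging of the paper's pointwise inequality $t-1\le\frac{t-x}{r}(r+x-1)$ followed by the integration of $s^{\alpha-1}$, so the two arguments are the same estimate in different coordinates.
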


\begin{proof} Consider first Case 1, and note that
$$f(1)=\ln\left((r-1+x)^{a+1}(r+1+x)^{b+1}r^{-1}\right).$$
We let the closed contour
$\gamma(x)$ follow $C(x,r)$ except for a small arc
around the possible locus of 
discontinuity at $x+r$. Let $\delta>0$ be such that
the removed arc consist of points $z_1+iz_2$ in the strip
$|z_2|<\delta$.
The end points below and above $x+r$
are joined to $1\pm i\delta$ by line segments  
along the axis. Finally $1-i\delta$ and $1+i\delta$
are connected by a half circle crossing the axis to the left of 1.
In the limit $\delta\to 0^+$ we obtain (\ref{I=J+R})
with
$$
\begin{aligned}
R^\ab_n(x)&=-\frac{\sin(\pi(n+\alpha))}\pi \int_1^{x+r} 
\frac{(z-1)^{n+\alpha}(1+z)^{n+\beta}}{(z-x)^{n+1}}\,dz\\
&=(-1)^{n-1}\frac{\sin(\pi\alpha)}\pi \int_{1-x}^{r} 
\frac{(s+x-1)^{n+\alpha}(1+s+x)^{n+\beta}}{s^{n+1}}\,ds.
\end{aligned}
$$
In particular, $R^\ab_n(x)=0$ if $\alpha=0$
so that we may assume $\alpha>0$.
For $x<1$ and $0<s<r$ we have $\frac sr(1-x)\le 1-x$ and hence
$s+x-1\le \frac{s}{r}(r+x-1)$.  
It follows that
$$
\frac{(s+x-1)^{n+\alpha}(1+s+x)^{n+\beta}}{s^{n+1}}
\le
\frac{(r+x-1)^{n+\alpha}(1+r+x)^{n+\beta}s^{\alpha-1}}{r^{n+\alpha}}
$$
for $0<1-x<s<r$. Thus
$$
\begin{aligned}
|R^\ab_n(x)|&\le \frac{|\sin(\pi\alpha)|}\pi 
\frac{(r+x-1)^{n+\alpha}(1+r+x)^{n+\beta}}{r^{n+\alpha}}
\int_{0}^r s^{\alpha-1}\,ds\\
&=\frac{|\sin(\pi\alpha)|}{\pi\alpha} 
\frac{(r+x-1)^{n+\alpha}(1+r+x)^{n+\beta}}{r^{n}}
=\frac{|\sin(\pi\alpha)|}{\pi\alpha} e^{nf(1)}
\end{aligned}
$$
completing the proof for Case 1. 

Case 2 is trivial
since $1$ and $-1$ are both outside $C(x,r)$.
For the last case
we observe that
$$
I^{(\alpha,\beta)}_n(x)=
(-1)^nI^{(\beta,\alpha)}_n(-x)
$$
and likewise
$$
J^{(\alpha,\beta)}_n(x)=
(-1)^nJ^{(\beta,\alpha)}_n(-x).
$$
Moreover, from (\ref{defi f}) we see that 
$f_{b,a,-x}(t)=f_{a,b,x}(-t)$. Now Case 3 follows easily from Case 1.
\qed
\end{proof}

\begin{Lemma}\label{est f(1)} 
Let $t_0\in (t_1,t_2)$ be given by {\rm (\ref{defi t0})}. 
Then
$$
f(1)\leq f(t_0)+\frac1{140} f''(t_0),
$$
in Case 1, and likewise, in Case 3,
$$
f(-1)\leq f(t_0)+\frac1{140} f''(t_0).
$$
\end{Lemma}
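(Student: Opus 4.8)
The plan is to reduce the whole statement to Case 1 and, within Case 1, to the single quantitative fact that $t_0\le\tfrac12$; once that is in hand, Proposition \ref{lemma bound f} does all the remaining work. First I would dispose of Case 3 using the symmetry already recorded in the proof of Lemma \ref{remainder estimates}: replacing $(a,b,x)$ by $(b,a,-x)$ turns $f=f_{a,b,x}(t)$ into $f_{b,a,-x}(-t)$, hence sends $t_0\mapsto -t_0$, leaves $f''(t_0)$ unchanged, and sends $f(-1)\mapsto f(1)$, while carrying Case 3 to Case 1. Thus the Case 3 estimate is literally the Case 1 estimate for the parameters $(b,a,-x)$, and it suffices to treat Case 1.

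Since $1\in[-1,1]$, Proposition \ref{lemma bound f} applies at $t=1$ and gives
$$f(1)\le f(t_0)+\frac{1}{28(1+t_0^2)}\,f''(t_0)(1-t_0)^2.$$
Because $f''(t_0)<0$, the asserted bound $f(1)\le f(t_0)+\tfrac1{140}f''(t_0)$ follows as soon as $\dfrac{(1-t_0)^2}{28(1+t_0^2)}\ge\dfrac1{140}$, i.e. $\dfrac{(1-t_0)^2}{1+t_0^2}\ge\dfrac15$, which rearranges to $(2t_0-1)(t_0-2)\ge0$. For any $t_0\le\tfrac12$ the first factor is $\le0$ and the second is $<0$, so this holds. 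Hence the entire lemma is reduced to showing $t_0\le\tfrac12$ throughout Case 1.

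To prove $t_0\le\tfrac12$ I would regard $t_0$ in (\ref{defi t0}) as a function of $x$ with $a,b$ fixed and differentiate; a short computation gives
$$\frac{\partial t_0}{\partial x}=\frac{1}{2\sqrt{a+b+1}}\,\frac{(b-a)x-(a+b)}{(1-x^2)^{3/2}},$$
whose sign is that of $(b-a)x-(a+b)$. For $x\in(0,1)$ one has $(b-a)x\le|b-a|\le a+b$, so this is $\le0$ and $t_0$ is non-increasing in $x$ on $(0,1)$. As Case 1 is the interval $\frac{a+b}{a+b+2}<x<1$, the maximum of $t_0$ there is attained at the left endpoint $x=\frac{a+b}{a+b+2}$, where (as in Lemma \ref{remainder estimates}) $t_2=1$; evaluating $t_0=1-(t_2-t_0)$ with (\ref{t0 differences}) and (\ref{t2-t1}) yields $t_0=1-\frac{(a+1)(a+b+2)}{2(a+b+1)}$. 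Since $(a+1)(a+b+2)=(a+1)(a+b+1)+(a+1)\ge(a+b+1)+(a+1)>a+b+1$, this value is $<\tfrac12$, so $t_0\le\tfrac12$ on all of Case 1.

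The only delicate point is establishing $t_0\le\tfrac12$: a naive appeal to Proposition \ref{lemma bound f} by itself fails precisely because the factor $(1-t_0)^2/(1+t_0^2)$ can be arbitrarily small when $t_0$ is near $1$, so that possibility must be excluded. I would stress that it is genuinely $t_0\le\tfrac12$ (one-sided) that holds, not $|t_0|\le\tfrac12$ — indeed $t_0\to-\infty$ as $x\to1$ when $a>0$ — which is why the monotonicity argument above, rather than a symmetric squaring estimate, is the correct tool.
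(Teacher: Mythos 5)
Your proof is correct and follows essentially the same route as the paper: reduce Case 3 to Case 1 by the $(a,b,x)\mapsto(b,a,-x)$ symmetry, apply Proposition \ref{lemma bound f} at $t=1$, and reduce the constant $\tfrac1{140}$ to the key fact $t_0\le\tfrac12$, which you establish exactly as the paper does, by showing $t_0$ is decreasing in $x$ and evaluating at $x=\frac{a+b}{a+b+2}$ (your endpoint value $1-\frac{(a+1)(a+b+2)}{2(a+b+1)}$ agrees with the paper's $\frac{(b-a)(a+b+2)-(a+b)^2}{4(a+b+1)}$). The only differences are cosmetic, e.g.\ computing the endpoint value via $t_2=1$ and (\ref{t0 differences}) rather than directly from (\ref{defi t0}).
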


\begin{proof} 
It follows from (\ref{defi t0}) that the derivative 
of $t_0=t_0(x)$ as a function of $x$ is
$$\frac{-(a+b)+(b-a)x}{2(a+b+1)^{1/2}(1-x^2)^{3/2}}.$$
Since $|b-a|\le a+b$ it follows that $t_0$ 
is a 
decreasing function of $x\in (-1,1)$. Hence
in Case 1,
$$
t_0(x)<t_0(\frac{a+b}{a+b+2})=
\frac{(b-a)(a+b+2)-(a+b)^2}{4(a+b+1)}\le\frac12$$
where the last inequality follows from 
$$(b-a)(a+b+2)-(a+b)^2= -2a(a+b+1)+2b \le 2(a+b+1).$$
From Proposition \ref{lemma bound f} and
(\ref{eq f''}) we have
$$
f(1)\le f(t_0)+
\frac{(1-t_0)^2}{28(1+t_0)^2}f''(t_0)
$$
with $f''(t_0)< 0$.
Since $t_0\le\frac12$ we find
$$4t_0^2-10t_0+4=4(t_0-\frac12)(t_0-2)\ge 0$$
and
$$
\frac{(1-t_0)^2}{1+t_0^2}-\frac 15
=\frac{4t_0^2-10t_0+4}{5(1+t_0^2)}\ge 0.
$$
Hence
$$
f(1)\le f(t_0)+\frac1{140} f''(t_0)
$$
as claimed. The proof in Case 3 follows by the observation
at the end of the proof of Lemma \ref{remainder estimates},
since the $t_0$ associated with the data $b,a,-x$ is
the negative of the $t_0$ associated with $a,b,x$.
\qed\end{proof}

We can now complete the proof of Theorem \ref{thm1}.
As in (\ref{I ineq}) we find
$$
|J^\ab_n(x)|
\le \frac1{\pi} \int_{0}^{\pi}
e^{n f(\cos\theta)}\,d\theta
\le C_ 1\, e^{n f(t_0)} (n|f''(t_0)|)^{-1/4}
$$
where $C_1=2D^{-1/4}=2\sqrt[4]{28}$.
Since $e^{-t}\le \frac1{\sqrt2}t^{-1/4}$ 
for all $t>0$ 
we obtain from Lemmas 
\ref{remainder estimates} and \ref{est f(1)}
that
$$
|R^\ab_n(x)|\le C_2\, e^{n f(t_0)} (n|f''(t_0)|)^{-1/4}
$$
with $C_2=\frac1{\sqrt2}\sqrt[4]{140}=
\sqrt[4]{35}$. All together
$$
|I^\ab_n(x)|\le C_3 \,e^{n f(t_0)} (n|f''(t_0)|)^{-1/4}
$$
with $C_3=C_1+C_2$.
Still proceeding as in Section \ref{Section integral}
and using Lemma \ref{Stirling bound},
we finally get
$$
\begin{aligned}
|g^\ab_n(x)|&\le C_3
\left(\frac{(n+1)(n+\alpha+\beta+1)}{(n+\alpha+1)(n+\beta+1)}\right)^{1/4}
\left(n|f''(t_0)|\right)^{-1/4}\\
&\le C(1+\alpha+\beta+2n)^{-1/4}(1-x^2)^{-1/4}
\end{aligned}
$$
for $C=\sqrt[4]{6}C_3$. In particular, we find $C< 12$.
\qed

\end{document}